\newtheorem{Theorem}{Theorem}
\newtheorem{Definition}{Definition}
\newtheorem{Lemma}{Lemma}
\newtheorem{Proposition}{Proposition}
\newtheorem{Remark}{Remark}
\author{Vladimir Rovenski\footnote{Department of Mathematics, University of Haifa, 3498838 Haifa, Israel.
       \newline
       e-mail: \texttt{vrovenski@univ.haifa.ac.il} } }
\title{Characterization of Sasakian manifolds}
\begin{document}

\date{}

\maketitle

\begin{abstract}
Weak contact metric manifolds, i.e., the~linear complex structure on the contact distribu\-tion is replaced by a nonsingular skew-symmetric tensor,
defined by the author and R.\,Wolak, allowed us to take a new look at the theory of contact manifolds.
In this paper, we continue our study \cite{rov-2023,rov-2023c} of a structure of this type, called a weak nearly Sasakian structure,
and prove two theorems characterizing Sasakian manifolds.
Our~main result generalizes the theorem by A.\,Nicola--G.\,Dileo--I.\,Yudin (2018)
and provides a new criterion for a weak almost contact metric manifold to be Sasakian.

\vskip1.5mm\noindent
\textbf{Keywords}:
Almost contact structure,
nearly Sasakian manifold,
Riemannian curvature.

\vskip1.5mm\noindent
\textbf{Mathematics Subject Classifications (2010)} 53C15, 53C25, 53D15
\end{abstract}


\section{Introduction}
\label{sec:00-ns}

Contact Riemannian geometry plays an important role in both pure mathematics and theoretical physics.
It considers a $(2n + 1)$-dimensional smooth manifold $M$ equipped with an almost contact metric structure $(\varphi,\xi,\eta,g)$, i.e.,
$g$ is a Riemannian metric with the Levi-Civita connection $\nabla$,
$\varphi$ is a $(1,1)$-tensor, $\xi$ is a unit vector field and $\eta$ is its dual 1-form ($\eta(\xi)=1$)~satisfying
 $g(\varphi X,\varphi Y)= g(X,Y) -\eta(X)\,\eta(Y)$ for any $X,Y\in\mathfrak{X}_M$,
and $\mathfrak{X}_M$ is the space of smooth vector fields on $M$.
An important class of almost contact metric manifolds $M^{\,2n+1}(\varphi,\xi,\eta,g)$ consists of Sasakian manifolds characterized by the equality, see \cite[Theorem~6.3]{blair2010riemannian},
\begin{equation}\label{E-nS-Sas}
 (\nabla_X\,\varphi)Y=g(X,Y)\,\xi -\eta(Y)X,\quad X,Y\in\mathfrak{X}_M .
\end{equation}
D.~Blair, D.~Showers and Y.~Komatu \cite{blair1976} defined nearly Sasakian structure $(\varphi,\xi,\eta,g)$
using a condition similar to \eqref{E-nS-Sas} for the symmetric part of $\nabla\varphi$:
\begin{equation}\label{E-nS-02}
 (\nabla_Y\,\varphi)Y = g(Y,Y)\,\xi -\eta(Y)Y,\quad Y\in\mathfrak{X}_M,
\end{equation}
and showed that a normal nearly Sasakian structure is Sasakian and hence is contact.
Nearly Sasakian structure is a counterpart of the nearly K\"{a}hler structure $(J,g)$ defined by A.~Gray \cite{G-70} by the condition that the symmetric part of $\nabla J$ vanishes, in contrast to the K\"{a}hler case where $\nabla J=0$.
An~example of a nearly Sasakian (non-Sasakian) manifold is the sphere $S^{\,5}$ endowed with an almost contact metric structure induced by the almost Hermitian structure of~$S^{\,6}$.

The curvature and topology of nearly Sasakian manifolds have been studied by several authors, e.g.,
Z.~Olszak, see \cite{Ol-1979,Ol-1980}, B. Cappelletti-Montano, G. Dileo, A.D. Nicola and I. Yudin, see~\cite{C-MD-2016,NDY-2018}.
Any nearly Sasakian manifold of dimension greater than 5 is Sasakian, see \cite{NDY-2018},
and any 5-dimensional nearly Sasakian manifold has Einstein metric of positive scalar curvature, see~\cite{C-MD-2016};
and a 3-dimensional nearly Sasakian manifold is Sasakian, see~\cite{Ol-1980}.

In \cite{RP-2,RWo-2,rov-117} and \cite[Section~5.3.8]{Rov-Wa-2021}, we introduced metric structures
that genera\-lize the almost contact, Sasakian, etc. metric structures.
In~\cite{rov-2023,rov-2023c} we investigated new structures of this type, called {weak nearly Sasakian structure}
and {weak nearly K\"{a}hler structure}, and asked the question: \textit{under what conditions are weak nearly Sasakian manifolds Sasakian}?

These so-called ``weak" structures (the~li\-near complex structure on the contact distribution is replaced by a nonsingular skew-symmetric tensor)
made it possible to take a new look at the classical theory and find new applications,
e.g., in Sasakian twistor spinors theory, see E.C.~Kim~\cite{K-2023}.
For an oriented Riemannian manifold, its twistor space can be seen as the space of linear complex structures on the tangent space
that are compatible with the metric $g$ and orientation.
Some authors, for example, A.C. Herrera in \cite{H-2022}, studied the problem of finding parallel skew-symmetric 2-tensors different from almost-complex structures on a Riemannian manifold and classified such tensors.
We~believe that theories like twistor spinors can be naturally extended by considering non-singular skew-symmetric tensors instead of linear complex~structures.

In this article, we continue our study of the geometry of weak nearly Sasakian manifolds and
find new criterions for a weak almost contact metric manifold to be Sasakian.
Section~\ref{sec:01-ns}, following the introductory Section~\ref{sec:00-ns}, reviews the basics of weak almost contact manifolds
and auxiliary results on the geometry of weak almost Sasakian structure with conditions \eqref{E-nS-10}--\eqref {E-nS-04c}.
Section~\ref{sec:04-ns} presents some properties of the curvature tensor and covariant derivatives of tensors $\varphi$ and $h$.
In Theorem~\ref{T-0.1}, we consider weak almost contact metric manifolds with the condition \eqref{E-nS-10}
and characterize Sasakian manifolds in this class using the property \eqref{E-nS-Sas}.
In Theorem~\ref{Th-4.5} (generalizing Theorem~3.3 from \cite{NDY-2018}, see also Theorem~4.9 from~\cite{C-MD-2019}) we characterize Sasakian manifolds of dimension greater than five in the class of weak almost contact metric manifolds with conditions \eqref{E-nS-10}--\eqref{E-nS-04c}.
In~the proof of Theorem~\ref{Th-4.5} we also use our result from \cite{rov-2023c} (see Theorem~\ref{T-2.2}) that
under certain conditions the weak nearly Sasakian structure foliates into two types of totally geodesic foliations.
In~Section~\ref{sec:app}, we prove two lemmas (formulated in Section~\ref{sec:04-ns}) that allow us to obtain the main results.
Our~proofs use the properties of new tensors, as well as classical constructions.

\section{Preliminaries}
\label{sec:01-ns}

A {weak almost contact metric structure} on a smooth manifold $M^{2n+1}\ (n>0)$ is a set $(\varphi,Q,\xi,\eta, g)$,
where $\varphi$ is a $(1,1)$-tensor, $\xi$ is a vector field, $\eta$ is a dual 1-form ($\eta(\xi)=1$),
$g$ is a Riemannian metric and $Q$ is a nonsingular $(1,1)$-tensor on $M$, satisfying, see \cite{RP-2,RWo-2},
\begin{align}\label{E-nS-2.2}
g(\varphi X,\varphi Y)= g(X,Q\,Y) -\eta(X)\,\eta(Y),\quad X,Y\in\mathfrak{X}_M .
\end{align}
Assume that a $2n$-dimensional distribution $\ker\eta$ is $\varphi$-invariant, i.e., $\varphi(\ker\eta)\subset\ker\eta$,
as in the classical theory \cite{blair2010riemannian}, where $Q={\rm id}_{\,TM}$.
By this and \eqref{E-nS-2.2}, $\ker\eta$ is $Q$-invariant and the following~hold:
\begin{align*}
 \varphi\,\xi=0,\quad \eta\circ\varphi=0,\quad \eta\circ Q=\eta,\quad [Q,\,\varphi]:={Q}\circ\varphi - \varphi\circ{Q}=0.
\end{align*}
For a weak almost contact metric structure, $\varphi$ is skew-symmetric and $Q$ is self-adjoint and positive definite.
Setting $Y=\xi$ in \eqref{E-nS-2.2}, we obtain $\eta(X)=g(\xi, X)$.
We get a {weak contact metric structure} is the following is true:
\[
 d\eta(X,Y)=g(X,\varphi Y),\quad X,Y\in\mathfrak{X}_M,
\]
where the exterior derivative $d\eta$ of $\eta$ is given~by
 $d\eta(X,Y) = \frac12\,\{X(\eta(Y)) - Y(\eta(X)) - \eta([X,Y])\}$, for example,~\cite{blair2010riemannian}.
A~1-form $\eta$ on a manifold $M^{\,2n+1}$ is said to be {contact} if $\eta\wedge (d\eta)^n\ne0$, e.g.,~\cite{blair2010riemannian}.
For a weak contact metric structure $(\varphi,Q,\xi,\eta,g)$, the 1-form $\eta$ is contact, see \cite[Lemma~2.1]{rov-2023b}.

\begin{Definition}[see \cite{rov-2023}]\rm
A weak almost contact metric manifold $M^{\,2n+1}(\varphi,Q,\xi,\eta,g)$ is called \textit{weak nearly Sasaki\-an}~if \eqref{E-nS-02} is true, or, equivalently,
\begin{equation}\label{E-nS-00b}
 (\nabla_Y\,\varphi)Z + (\nabla_Z\,\varphi)Y = 2\,g(Y,Z)\,\xi -\eta(Z)Y -\eta(Y)Z,\qquad Y,Z\in\mathfrak{X}_M.
\end{equation}
\end{Definition}

In addition to \eqref{E-nS-00b}, the following two conditions, which are automatically satis\-fied by almost contact metric manifolds,
play a significant  role in this paper:
\begin{align}\label{E-nS-10}
 & (\nabla_X\,\widetilde Q)\,Y=0,\quad X\in\mathfrak{X}_M,\ Y\in\ker\eta, \\
\label{E-nS-04c}
 & R_{\widetilde Q X,Y}Z\in\ker\eta,\quad X,Y,Z\in\ker\eta ,
\end{align}
where
$\widetilde Q= Q - {\rm id}_{\,TM}$ is a ``small" tensor,  and
 $R_{{X},{Y}}=\nabla_X\nabla_Y -\nabla_Y\nabla_X -\nabla_{[X,Y]}$ for all $X,Y\in\mathfrak{X}_M$,
is the curvature tensor, e.g.,~\cite{CLN-2006}.
From \eqref{E-nS-04c} and the Bianchi identity we get
\[
 R_{\,X,Y}\,\widetilde Q Z\in\ker\eta,\quad X,Y,Z\in\ker\eta.
\]
Using the equalities $(\nabla_\xi\,\varphi)\,\xi = 0$ and $\varphi\,\xi=0$ for a weak nearly Sasakian manifold, we conclude that $\xi$ is a geodesic field ($\nabla_\xi\,\xi=0$).
From \eqref{E-nS-10} and $\nabla_\xi\,\xi=0$, we~get
\begin{equation*}
 \nabla_\xi\,\widetilde Q=0.
\end{equation*}
The $\xi$-curves form a Riemannian geodesic foliation, and $-\nabla\xi$ is its splitting operator (co-nullity tensor), see \cite[Section~1.3.1]{Rov-Wa-2021}.
Taking derivative of the equality $g(\varphi V,Z)=-g(V,\varphi Z)$, we see that the tensor $\nabla_{Y}\,\varphi$ of a weak nearly Sasakian manifold is skew-symmetric:
\begin{equation}\label{E-nS-05e}
 g((\nabla_{Y}\,\varphi) V, Z)=-g((\nabla_{Y}\,\varphi) Z, V).
\end{equation}
The Ricci identity (commutation formula) for the (1,1)-tensor $\varphi$ can be written as (e.g., \cite{CLN-2006})
\begin{equation}\label{E-nS-05}
 g((\nabla^2_{X,Y}\,\varphi)V, Z) - g((\nabla^2_{Y,X}\,\varphi)V, Z) = g(R_{\,X,Y}\,\varphi V, Z) + g(R_{\,X,Y}V, \varphi Z).
\end{equation}
where the second covariant derivative is given by $\nabla^2_{X,Y} = \nabla_{X}\nabla_{Y} - \nabla_{\nabla_{X}Y}$.
Taking covariant derivative of \eqref{E-nS-05e}, we see that $\nabla^2_{X,Y}\,\varphi$ of a weak nearly Sasakian manifold is~skew-symmetric:
\[
 g((\nabla^2_{X,Y}\,\varphi)V, Z)=-g((\nabla^2_{X,Y}\,\varphi)Z, V) .
\]

In the rest of the section, we survey some properties of weak nearly Sasakian manifolds, see~\cite{rov-2023c}.
Define a (1,1)-tensor field $h$ on $M$ (as in the classical case, e.g., \cite{NDY-2018}),
\begin{equation}\label{E-c-01}
  h = \nabla\xi + \varphi.
\end{equation}
We get $\eta\circ h = 0$ and $h(\ker\eta)\subset\ker\eta$. Since $\xi$ is a geodesic vector field, we also get $h\,\xi=0$.
Since $\xi$ is a Killing vector field, see \cite{rov-2023}, and $\varphi$ is skew-symmetric, the tensor $h$ is skew-symmetric.
The~distribution $\ker\eta$ is integrable, $[X,Y]\in\ker\eta\ (X,Y\in\ker\eta)$, if and only if $h=\varphi$,
and in this case, our manifold splits along $\xi$ and $\ker\eta$, i.e., is locally the metric product.

\begin{Lemma}[see \cite{rov-2023c}] \label{L-nS-02}
For a weak nearly Sasakian manifold $M^{\,2n+1}(\varphi,Q,\xi,\eta,g)$ we obtain
\begin{align}
\label{E-nS-01a}
 & (\nabla_X\,h)\,\xi = -h(h-\varphi)X ,\\
\label{E-nS-01c}
 & (\nabla_X\,\varphi)\,\xi =  -\varphi(h-\varphi) X ,\\
\label{E-nS-01b}
 & h\,\varphi + \varphi\,h = -2\,\widetilde Q ,\\
\label{E-nS-01d}
 & h\,Q = Q\,h \quad (h\ {\rm commutes\ with}\ Q).
\end{align}
Moreover,
 $h^2\varphi=\varphi h^2$, $h\varphi^2=\varphi^2 h$, $h^2\varphi^2=\varphi^2 h^2$, etc.
\end{Lemma}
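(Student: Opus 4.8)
The plan is to derive the four identities in the stated order, using only the definition $h=\nabla\xi+\varphi$ (equivalently $\nabla_X\xi=(h-\varphi)X$), the weak nearly Sasakian condition \eqref{E-nS-00b}, the compatibility \eqref{E-nS-2.2}, and the algebraic relations $\varphi\xi=0$, $h\xi=0$, $\eta\circ\varphi=\eta\circ h=0$, $[Q,\varphi]=0$ recorded above. First I would obtain \eqref{E-nS-01c} and \eqref{E-nS-01a} by differentiating the algebraic identities $\varphi\xi=0$ and $h\xi=0$. Since $\nabla_X(\varphi\xi)=(\nabla_X\varphi)\xi+\varphi\nabla_X\xi=0$ and $\nabla_X\xi=(h-\varphi)X$, we get $(\nabla_X\varphi)\xi=-\varphi(h-\varphi)X$; the same argument applied to $h\xi=0$ gives $(\nabla_X h)\xi=-h(h-\varphi)X$.

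The central step is \eqref{E-nS-01b}. Here I would pair the weak nearly Sasakian condition \eqref{E-nS-00b} with $\xi$. Differentiating $\eta(\varphi Z)=0$ and using $\nabla_Y\xi=(h-\varphi)Y$ yields $g((\nabla_Y\varphi)Z,\xi)=-g(\varphi Z,(h-\varphi)Y)$, and symmetrically for the second term. Taking $g(\,\cdot\,,\xi)$ of \eqref{E-nS-00b} thus gives $-g(\varphi Z,(h-\varphi)Y)-g(\varphi Y,(h-\varphi)Z)=2g(Y,Z)-2\eta(Y)\eta(Z)$. I would then substitute \eqref{E-nS-2.2} to replace the two terms $g(\varphi Y,\varphi Z)$ by $g(Y,QZ)-\eta(Y)\eta(Z)$, so that the quadratic $\eta$-terms cancel and the equation reduces to $-g(\varphi Z,hY)-g(\varphi Y,hZ)=-2g(Y,\widetilde Q Z)$. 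Finally, using the skew-symmetry of $\varphi$ and of $h$ to move the operators onto the first argument, the left-hand side becomes $g((h\varphi+\varphi h)Y,Z)$, and since $Y,Z$ are arbitrary and $\widetilde Q$ is self-adjoint this reads off as $h\varphi+\varphi h=-2\widetilde Q$.

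For \eqref{E-nS-01d} and the closing commutation relations I would argue purely algebraically from \eqref{E-nS-01b}. Since $[Q,\varphi]=0$ we also have $[\widetilde Q,\varphi]=0$; multiplying \eqref{E-nS-01b} by $\varphi$ on the left and on the right and subtracting eliminates the mixed term $\varphi h\varphi$ and gives $\varphi^2 h-h\varphi^2=2[\widetilde Q,\varphi]=0$, i.e. $h\varphi^2=\varphi^2 h$. Combining this with the identity $\varphi^2 X=-QX+\eta(X)\,\xi$ (a direct consequence of \eqref{E-nS-2.2} and the skew-symmetry of $\varphi$), together with $h\xi=0$ and $\eta\circ h=0$ so that $h$ commutes with the map $X\mapsto\eta(X)\xi$, yields $hQ=Qh$, which is \eqref{E-nS-01d}. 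Then $[h,\widetilde Q]=0$, and multiplying \eqref{E-nS-01b} by $h$ on the left and right and subtracting gives $h^2\varphi-\varphi h^2=-2[h,\widetilde Q]=0$; the remaining relations $h^2\varphi^2=\varphi^2 h^2$, and so on, follow by iterating these commutations.

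The only delicate point is the bookkeeping in \eqref{E-nS-01b}: one must keep careful track of the skew-symmetries of $\varphi$ and $h$ when transferring operators between the two arguments of $g$, and must use \eqref{E-nS-2.2} (rather than $\varphi^2=-{\rm id}$) so that the tensor $Q$, and hence $\widetilde Q$, appears on the right-hand side. Everything after \eqref{E-nS-01b} is formal manipulation of commutators built from \eqref{E-nS-01b} and $[Q,\varphi]=0$, so I expect \eqref{E-nS-01b} to be the main obstacle.
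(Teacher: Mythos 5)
Your argument is correct, and it is the natural derivation: \eqref{E-nS-01a} and \eqref{E-nS-01c} by differentiating $h\xi=0$ and $\varphi\xi=0$ with $\nabla_X\xi=(h-\varphi)X$; \eqref{E-nS-01b} by pairing \eqref{E-nS-00b} with $\xi$ and substituting \eqref{E-nS-2.2}; and \eqref{E-nS-01d} together with the remaining commutation relations by purely algebraic manipulation of \eqref{E-nS-01b}, $[Q,\varphi]=0$ and $\varphi^2=-Q+\eta\otimes\xi$. The paper itself does not prove this lemma --- it imports it from \cite{rov-2023c} --- so there is no in-text proof to compare against; your computation reconstructs what that reference does. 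The only point worth making explicit is that your treatment of \eqref{E-nS-01b} uses the skew-symmetry of $h$ to move $h$ across the metric, and that fact is not formal: it rests on $\xi$ being a Killing field for a weak nearly Sasakian structure, which the paper also only quotes from \cite{rov-2023}. As long as you cite that input (it is stated in the preliminaries before the lemma), your proof is complete.
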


If a weak nearly Sasakian manifold satisfies the condition \eqref{E-nS-04c}, then its contact distribution
is {curvature invariant}, see~\cite{rov-2023c}, i.e.,
\begin{equation}\label{E-nS-04cc}
 R_{\,X,Y}Z\in\ker\eta,\quad X,Y,Z\in\ker\eta .
\end{equation}
For example, the tangent bundle of a totally geodesic submanifold in a Riemannian manifold and the distribution $\ker\eta$ of any 1-form $\eta$ on a real space form satisfy \eqref{E-nS-04cc}.

\begin{Lemma}[see \cite{rov-2023c}]\label{L-nS-04}
For a weak nearly Sasakian manifold $M^{\,2n+1}(\varphi,Q,\xi,\eta,g)$ with the condition
\eqref{E-nS-04c}, we obtain
\begin{align}
\label{E-3.24}
 & R_{\,X,\xi}\,Y = (\nabla_X\,(h-\varphi)\,)\,Y
  = (\nabla_X\,(h-\varphi))Y =  g( (h-\varphi)^2 X, Y)\,\xi - \eta(Y)\,(h-\varphi)^2 X .
\end{align}
In particular, $\nabla_\xi\,h = \nabla_\xi\,\varphi = \varphi h + \widetilde Q$ and
\begin{align}\label{E-3.23b}
 g(R_{\,\xi, X}Y, Z) =  \eta(Y)\,g( (h-\varphi)^2 X , Z) - \eta(Z)\,g( (h-\varphi)^2 X, Y).
\end{align}
\end{Lemma}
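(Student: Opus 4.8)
\textit{The plan.} The key structural observation is \eqref{E-c-01}: the tensor $h-\varphi$ equals $\nabla\xi$, so it is merely the covariant differential of $\xi$. Writing $P:=h-\varphi$ for brevity, one has $(\nabla_X P)Y=\nabla_X(\nabla_Y\xi)-\nabla_{\nabla_X Y}\xi=\nabla^2_{X,Y}\xi$. Since $\xi$ is a Killing field (as recalled in the preliminaries), I would invoke the Kostant (Bochner) formula for Killing fields, $\nabla^2_{X,Y}\xi=R_{X,\xi}Y$; this immediately gives the first equality (and explains why the two middle expressions in \eqref{E-3.24} coincide). The correct sign can be pinned down from the antisymmetry $g(\nabla^2_{X,Y}\xi,Z)=-g(\nabla^2_{X,Z}\xi,Y)$, obtained by differentiating the Killing relation $g(PY,Z)=-g(PZ,Y)$, combined with the first Bianchi identity; I expect fixing this sign to be the first point requiring care.

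Next I would establish the explicit quadratic form. Note $P$ is skew-symmetric with $P\xi=0$ and $\eta\circ P=0$. Splitting $R_{X,\xi}Y=(\nabla_X P)Y$ into its $\xi$- and $\ker\eta$-parts: the $\xi$-component follows directly, since $\nabla_X\xi=PX$, $P\xi=0$ and skew-symmetry of $P$ give $g((\nabla_X P)Y,\xi)=-g(PY,PX)=g(P^2X,Y)$ for all $X,Y$. For the $\ker\eta$-component I take $Y,Z\in\ker\eta$ and use the pair symmetry of curvature, $g(R_{X,\xi}Y,Z)=g(R_{Y,Z}X,\xi)=\eta(R_{Y,Z}X)$; writing $X=\eta(X)\,\xi+X'$ with $X'\in\ker\eta$, the term $\eta(R_{Y,Z}\,\xi)$ vanishes by skew-symmetry of the operator $R_{Y,Z}$, while $\eta(R_{Y,Z}X')=0$ by the curvature invariance \eqref{E-nS-04cc} following from \eqref{E-nS-04c}. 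Hence $R_{X,\xi}Y\in\mathbb{R}\,\xi$ for $Y\in\ker\eta$, and together with $R_{X,\xi}\,\xi=(\nabla_X P)\,\xi=-P^2X$ (immediate from $P\xi=0$) this yields $R_{X,\xi}Y=g(P^2X,Y)\,\xi-\eta(Y)\,P^2X$, i.e.\ \eqref{E-3.24}.

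For the remaining assertions I would argue as follows. Setting $X=\xi$ in the identity just proved gives $(\nabla_\xi P)Y=R_{\xi,\xi}Y=0$, so $\nabla_\xi(h-\varphi)=0$ and thus $\nabla_\xi h=\nabla_\xi\varphi$. To compute $\nabla_\xi\varphi$, I put $Y=\xi,\ Z=X$ in \eqref{E-nS-00b} and use \eqref{E-nS-01c} to obtain $(\nabla_\xi\varphi)X=\eta(X)\,\xi-X+\varphi(h-\varphi)X$; the compatibility \eqref{E-nS-2.2} yields $\varphi^2=-Q+\eta\otimes\xi$, whence $\eta(X)\,\xi-\varphi^2X=QX$ and $(\nabla_\xi\varphi)X=\varphi h\,X+\widetilde Q\,X$, giving $\nabla_\xi h=\nabla_\xi\varphi=\varphi h+\widetilde Q$. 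Finally \eqref{E-3.23b} is read off by lowering an index in $R_{\xi,X}Y=-R_{X,\xi}Y=\eta(Y)\,P^2X-g(P^2X,Y)\,\xi$.

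The main obstacle is bookkeeping rather than computation: fixing the correct sign in the Killing second-derivative formula, and deploying the pair symmetry of $R$ so that the hypothesis \eqref{E-nS-04c} enters at exactly the step where the $\ker\eta$-component of $R_{X,\xi}Y$ must vanish. Everything else reduces to the algebraic relations $\varphi^2=-Q+\eta\otimes\xi$, $P\xi=0$, and the skew-symmetry of $P=h-\varphi$.
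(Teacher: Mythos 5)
Your argument is correct: the identification $h-\varphi=\nabla\xi$, the Kostant formula $\nabla^2_{X,Y}\xi=R_{X,\xi}Y$ for the Killing field $\xi$ (the sign does check out with the paper's convention $R_{X,Y}=\nabla_X\nabla_Y-\nabla_Y\nabla_X-\nabla_{[X,Y]}$), the splitting of $R_{X,\xi}Y$ into its $\xi$- and $\ker\eta$-components using \eqref{E-nS-04cc}, and the computation of $\nabla_\xi\varphi$ from \eqref{E-nS-00b}, \eqref{E-nS-01c} and $\varphi^2=-Q+\eta\otimes\xi$ all go through. The paper itself imports this lemma from \cite{rov-2023c} without reproducing a proof, and your derivation is the standard one used in the nearly Sasakian literature, so there is nothing to flag.
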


A Sasakian manifold is defined as a contact metric manifold such that the tensor
$[\varphi, \varphi] + d\eta\otimes\xi$ vanishes identically,
where the Nijenhuis torsion $[\varphi,\varphi]$ of $\varphi$
is given~by (for example,~\cite{blair2010riemannian})
\begin{align*}
 [\varphi,\varphi](X,Y) = \varphi^2 [X,Y] + [\varphi X, \varphi Y] - \varphi[\varphi X,Y] - \varphi[X,\varphi Y].
\end{align*}

The following statement, generalizing \cite[Lemma~2.1]{Ol-1980}, will be used in the proof of Theorem~\ref{T-0.1}.

\begin{Proposition}[see also \cite{rov-2023c}]\label{Th-4.1}
For a weak nearly Sasakian manifold with the property \eqref{E-nS-10}, the equality $h=0$ holds if and only if the manifold is Sasakian.
\end{Proposition}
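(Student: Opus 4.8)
The plan is to establish the two implications of the equivalence separately, in both cases reducing ``Sasakian'' to the defining identity \eqref{E-nS-Sas}.

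\emph{From $h=0$ to Sasakian.} The first observation is that $h=0$ already pins down $Q$: substituting $h=0$ into \eqref{E-nS-01b} gives $\widetilde Q=0$, so $Q={\rm id}_{\,TM}$, $\varphi^2=-\,{\rm id}_{\,TM}+\eta\otimes\xi$, and $\nabla_X\,\xi=(h-\varphi)X=-\varphi X$. I would then encode the Sasakian defect in the tensor $G(X,Y)=(\nabla_X\,\varphi)Y-g(X,Y)\,\xi+\eta(Y)X$, so that \eqref{E-nS-Sas} is exactly $G=0$, while the weak nearly Sasakian condition \eqref{E-nS-00b} says precisely that $G$ is skew-symmetric in $X,Y$. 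Setting $C(X,Y,Z)=g(G(X,Y),Z)$, the skew-symmetry \eqref{E-nS-05e} of $\nabla\varphi$ together with that of $G$ shows that $C$ is totally skew-symmetric; and since $\nabla_X\,\xi=-\varphi X$ gives $(\nabla_X\,\varphi)\,\xi=\varphi^2 X$, one checks with $Q={\rm id}_{\,TM}$ that $g(G(X,Y),\xi)=0$. Thus $C$ is a $3$-form on $\ker\eta$, and the whole statement reduces to proving $C=0$.

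\emph{The main obstacle.} Differentiating $\varphi^2=-\,{\rm id}_{\,TM}+\eta\otimes\xi$ yields $C(X,\varphi Y,Z)=C(X,Y,\varphi Z)$, which confines $C$ to its ``$(3,0)+(0,3)$'' part relative to $\varphi$; this is the genuine nearly-K\"ahler-type obstruction, and the algebraic symmetries of $C$ alone do not force it to vanish. The extra input I would use is that $\xi$ is Killing. From $\nabla_X\,\xi=-\varphi X$ one gets $\nabla^2_{X,Y}\,\xi=-(\nabla_X\,\varphi)Y$, hence on one hand $R_{X,Y}\,\xi=-2\,G(X,Y)+\eta(Y)X-\eta(X)Y$, and on the other, via the Killing identity $\nabla^2_{X,Y}\,\xi=R_{\,\xi,Y}\,X$, also $R_{\,\xi,X}\,Y=-(\nabla_Y\,\varphi)X$. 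Substituting these two expressions into the pair symmetry $g(R_{\,\xi,X}Y,Z)=g(R_{\,Y,Z}\,\xi,X)$ of the curvature tensor and restricting to $X,Y,Z\in\ker\eta$ gives $C(Y,X,Z)=2\,C(Y,Z,X)$; combined with total skew-symmetry (so $C(Y,X,Z)=-C(X,Y,Z)$ and $C(Y,Z,X)=C(X,Y,Z)$) this forces $3\,C=0$, whence $C=0$, $G=0$, and \eqref{E-nS-Sas} holds. This curvature step is the crux of the argument.

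\emph{From Sasakian to $h=0$.} Here I would take $Y=\xi$ in \eqref{E-nS-Sas} to get $(\nabla_X\,\varphi)\,\xi=\eta(X)\xi-X$; since $(\nabla_X\,\varphi)\,\xi=-\varphi\,\nabla_X\,\xi$ always holds, this reads $\varphi\,\nabla_X\,\xi=X-\eta(X)\xi$, and applying $\varphi$ and using $\varphi^2=-Q+\eta\otimes\xi$ together with $\eta(\nabla_X\,\xi)=0$ yields $\nabla_X\,\xi=-Q^{-1}\varphi X$. To conclude $h=0$ it remains to prove $Q={\rm id}_{\,TM}$: computing $(\nabla_X\,\varphi^2)Y$ both by the Leibniz rule (using \eqref{E-nS-Sas}) and from $\varphi^2=-Q+\eta\otimes\xi$, then taking $Y\in\ker\eta$ and invoking \eqref{E-nS-10} (note $\nabla_X\,Q=\nabla_X\,\widetilde Q$) to discard $(\nabla_X\,Q)Y$, forces $g(Q^{-1}\varphi X,Y)=g(\varphi X,Y)$ for all horizontal $Y$, hence $Q={\rm id}_{\,TM}$ on $\ker\eta$ and so on $M$. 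Then $\nabla_X\,\xi=-\varphi X$ and $h=\nabla\xi+\varphi=0$. This direction is routine once \eqref{E-nS-10} is brought in; indeed \eqref{E-nS-10} is exactly what makes it work.
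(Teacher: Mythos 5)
The paper does not actually prove Proposition~\ref{Th-4.1}; it is quoted from \cite{rov-2023c} (as a generalization of \cite[Lemma~2.1]{Ol-1980}), so there is no in-text argument to compare yours against line by line. Judged on its own, your proof is correct and self-contained. The direction ``Sasakian $\Rightarrow h=0$'' is handled carefully: you do not presuppose $Q={\rm id}_{\,TM}$ but extract it from \eqref{E-nS-Sas} by differentiating $\varphi^2=-Q+\eta\otimes\xi$ and invoking \eqref{E-nS-10} to kill $(\nabla_XQ)Y$ for horizontal $Y$ --- which is exactly where that hypothesis is needed, and is in the same spirit as the computation the paper performs inside Theorem~\ref{T-0.1}. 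For the converse, the reduction $h=0\Rightarrow\widetilde Q=0$ via \eqref{E-nS-01b} is right, and deducing $3C=0$ from the total skew-symmetry of $C$ together with the pair symmetry of the curvature tensor is the correct mechanism. One slip to fix: the Killing identity should read $\nabla^2_{X,Y}\,\xi=R_{X,\xi}Y$, equivalently $R_{\xi,X}Y=(\nabla_X\varphi)Y$ when $h=0$ (compare the first equality in \eqref{E-3.24}), not $\nabla^2_{X,Y}\,\xi=R_{\xi,Y}X$; your intermediate formula $R_{\xi,X}Y=-(\nabla_Y\varphi)X$ therefore differs from the correct one by the term $2\,g(X,Y)\,\xi$ coming from \eqref{E-nS-00b}. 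Since that discrepancy is proportional to $\xi$ and you only pair against $Z\in\ker\eta$, the scalar identity you obtain is unchanged and the conclusion $3C=0$, hence $C=0$, stands. You should also state explicitly that you are using the fact, recorded in Section~\ref{sec:01-ns} from \cite{rov-2023}, that $\xi$ is Killing on a weak nearly Sasakian manifold.
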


For a weak nearly Sasakian manifold satisfying \eqref{E-nS-10} and \eqref{E-nS-04c},
the eigenvalues (and their multiplicities) of the self-adjoint operator $h^2$ are constant, see \cite{rov-2023c}.
Since $h$ is skew-symmetric, the nonzero eigenvalues of $h^2$ are negative, and the spectrum of $h^2$ has the~form
\begin{equation}\label{E-nS-11b}
  Spec(h^2) = \{0, -\lambda_1^2,\ldots -\lambda_r^2\} ,
\end{equation}
where $\lambda_i$ is a positive real number and $\lambda_i\ne \lambda_j$ for $i\ne j$.
If $X$ is a unit eigenvector of $h^2$ with eigenvalue $-\lambda^2_i$, then by  \eqref{E-nS-01b} and \eqref{E-nS-01d},
$X, \varphi X, hX$ and $h\,\varphi X$ are nonzero eigenvectors of $h^2$ with eigenvalue $-\lambda^2_i$.
Denote by $[\xi]$ the 1-dimensional distribution generated by $\xi$,
and by $D_0$ a distribution of the eigenvectors with eigenvalue~0 orthogonal to $\xi$.
Denote by $D_i$ a distribution of the eigenvectors with eigenvalue $-\lambda^2_i$.
Note that
$D_0$ and $D_i\ (i = 1,\ldots, r)$ belong to $\ker\eta$ and are $\varphi$- and $h$- invariant.
The tensors $h$ and $\widetilde Q$ of a weak nearly Sasakian manifold vanish on $D_0$,~\cite{rov-2023c}.

The following statement generalizes \cite[Theorems~3.3 and 3.5]{C-MD-2016}.

\begin{Theorem}[see \cite{rov-2023c}]\label{T-2.2}
Let $M^{\,2n+1}(\varphi,Q,\xi,\eta,g)$ be a weak nearly Sasakian manifold with conditions \eqref{E-nS-10} and \eqref{E-nS-04c},
and let the spectrum of the self-adjoint operator $h^2$ have the form \eqref{E-nS-11b}.
Then, the distribution $[\xi]\oplus D_0$ and each distribution $[\xi]\oplus D_i\ (i = 1,\ldots, r)$ are integrable with totally geodesic leaves.
In particular, the eigenvalue $0$ has multiplicity $2p+1$ for some integer $p\ge0$.
If $p>0$, then

\noindent\ \
$(a)$
the distribution $[\xi]\oplus D_1\oplus\ldots\oplus D_r$ is integrable and defines a Riemannian foliation with totally geodesic~leaves;

\noindent\ \
$(b)$
the leaves of $[\xi]\oplus D_0$ are $(2p+1)$-dimensional Sasakian manifolds.
\end{Theorem}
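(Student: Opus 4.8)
The plan is to reduce the whole statement to a single fact: the Levi-Civita connection preserves the eigendistributions of $h^2$ modulo the $\xi$-direction. Since $h^2$ is self-adjoint with constant eigenvalues $\mu_k=-\lambda^2_k$ (with $\mu_0=0$), for local eigenvector fields $X\in D_i$, $Y\in D_j$ with $i\ne j$ one has the eigenvalue-gap identity
\begin{align*}
 g\big((\nabla_{Z}\,h^2)X,\,Y\big)=(\mu_i-\mu_j)\,g(\nabla_{Z} X,\,Y),\qquad Z\in\mathfrak{X}_M,
\end{align*}
so $\nabla_{Z}X$ has no $D_j$-component exactly when the off-diagonal blocks of $\nabla_{Z}h^2$ vanish. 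The $\xi$-direction is immediate: using $\nabla_\xi h=\varphi h+\widetilde Q$ from Lemma~\ref{L-nS-04}, together with $h\varphi+\varphi h=-2\,\widetilde Q$ and $hQ=Qh$ from \eqref{E-nS-01b} and \eqref{E-nS-01d}, a short computation collapses $\nabla_\xi h^2=(\nabla_\xi h)h+h(\nabla_\xi h)$ to the commutator $[h,\widetilde Q]=0$, so $\nabla_\xi h^2=0$.

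First I would treat the directions $Z\in\ker\eta$, which is where the hypotheses \eqref{E-nS-10} and \eqref{E-nS-04c} enter. Writing $\nabla_{Z}h^2=(\nabla_{Z}h)h+h(\nabla_{Z}h)$ and substituting $(\nabla_{Z}h)W=(\nabla_{Z}\varphi)W+R_{\,Z,\xi}W$ (from \eqref{E-3.24}), I would evaluate $g((\nabla_{Z}h^2)X,Y)$ on $X\in D_i$, $Y\in D_j$ with $i\ne j$ (indices running over $0,1,\dots,r$). The curvature term is controlled by Lemma~\ref{L-nS-04}: for $W\in\ker\eta$ one has $R_{\,Z,\xi}W=g((h-\varphi)^2 Z,W)\,\xi$, a multiple of $\xi$, so it contributes nothing to a $D_i$--$D_j$ pairing; the tensor $\nabla_{Z}\varphi$ is handled through its symmetric part \eqref{E-nS-00b} and the skew-symmetry relation \eqref{E-nS-05e}, while \eqref{E-nS-10} makes $\widetilde Q=-\tfrac12(h\varphi+\varphi h)$ compatible with the eigenspace splitting. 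Since $h$ and $\varphi$ commute with $h^2$ and preserve each $D_i$, I expect the mixed blocks to cancel, giving $g(\nabla_{Z}X,Y)=0$ for $X\in D_i$, $Y\in D_j$, $i\ne j$. Pinning down $\nabla\varphi$ on $\ker\eta$ precisely enough to kill these off-diagonal blocks is the main obstacle, and it is exactly where the two extra conditions are indispensable.

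Granting that $\nabla$ preserves the eigendistributions modulo $\xi$, the geometric conclusions become routine. Because $\nabla_{X}\xi=(h-\varphi)X$ maps each $D_i$ into itself (by \eqref{E-c-01}) and $\nabla_\xi\xi=0$, for $U,V\in\Gamma([\xi]\oplus D_i)$ the field $\nabla_{U}V$ has components only in $[\xi]\oplus D_i$; hence $[\xi]\oplus D_i$ is integrable with totally geodesic leaves, and the same argument applies verbatim to $[\xi]\oplus D_0$ and to $[\xi]\oplus D_1\oplus\dots\oplus D_r$. On a leaf $L$ of $[\xi]\oplus D_0$ the tensors $h$ and $\widetilde Q$ vanish, so $Q=\mathrm{id}$ and the induced structure is a genuine nearly Sasakian structure with $h=0$; since $L$ is totally geodesic it inherits \eqref{E-nS-00b} and \eqref{E-nS-10}, so Proposition~\ref{Th-4.1} makes $L$ Sasakian, forcing $\dim L=2p+1$ and $\dim D_0=2p$. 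This proves (b) and the multiplicity claim.

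Finally, for (a) it remains to check that the totally geodesic foliation $[\xi]\oplus D_1\oplus\dots\oplus D_r$, with orthogonal complement $D_0$, is Riemannian, i.e. that $g$ is bundle-like. I would verify $(\mathcal{L}_{V}g)(X,Y)=g(\nabla_{X}V,Y)+g(\nabla_{Y}V,X)=0$ for $V$ tangent to the leaves and $X,Y\in\Gamma(D_0)$. For $V=\xi$ this equals $g((h-\varphi)X,Y)+g((h-\varphi)Y,X)$, which vanishes since $h-\varphi=\nabla\xi$ is skew-symmetric; for $V\in D_i$ $(i\ge1)$, using $g(V,Y)=g(V,X)=0$ it reduces to the terms $g(V,\nabla_{X}Y)$ and $g(V,\nabla_{Y}X)$, both zero because $\nabla_{X}Y,\nabla_{Y}X\in[\xi]\oplus D_0$ by the total geodesy of $[\xi]\oplus D_0$ and $V\perp[\xi]\oplus D_0$. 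Hence the metric is bundle-like and the foliation is Riemannian, completing the proof.
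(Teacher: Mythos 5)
First, a point of comparison: the paper does not actually prove Theorem~\ref{T-2.2} — it is imported verbatim from \cite{rov-2023c}, as the bracketed attribution indicates — so there is no in-paper proof to measure your argument against; I can only assess the proposal on its own terms. Your framing is the standard and correct one (it is the strategy of \cite{C-MD-2016} in the classical case): the eigenvalue-gap identity $g((\nabla_Z h^2)X,Y)=(\mu_i-\mu_j)\,g(\nabla_Z X,Y)$ is valid because the eigenvalues are constant, the computation $\nabla_\xi h^2=0$ from $\nabla_\xi h=\varphi h+\widetilde Q$, \eqref{E-nS-01b} and \eqref{E-nS-01d} is correct, and \emph{granting} that $\nabla$ preserves each $D_i$ modulo $[\xi]$, your derivations of integrability, total geodesy, the bundle-like property in (a), and the Sasakian leaves in (b) via Proposition~\ref{Th-4.1} are all sound.

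The genuine gap is that the one step carrying all the weight is not carried out. For $Z\in\ker\eta$, $X\in D_i$, $Y\in D_j$ with $i\ne j$, your substitution of \eqref{E-3.24} reduces the off-diagonal block to
\[
 g((\nabla_Z h^2)X,Y)=g((\nabla_Z\varphi)(hX),Y)-g((\nabla_Z\varphi)X,hY),
\]
and at this point you write only that you ``expect the mixed blocks to cancel.'' None of the tools you cite forces this: the nearly Sasakian condition \eqref{E-nS-00b} symmetrizes in the wrong pair of slots (both terms above carry the same differentiation direction $Z$), the skew-symmetry \eqref{E-nS-05e} merely moves $\nabla_Z\varphi$ across the pairing, and the commutation relations of Lemma~\ref{L-nS-02} say nothing about $\nabla\varphi$ in $\ker\eta$-directions. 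In the classical case this cancellation is exactly the theorem that $h^2$ is parallel (Olszak; Theorem~3.2 of \cite{C-MD-2016}), whose proof requires curvature identities of the calibre of Lemmas~\ref{L-R01b} and \ref{L-R03}; in the weak setting it is precisely where \eqref{E-nS-10} and \eqref{E-nS-04c} must enter quantitatively, not just as a license for optimism. As written, the proposal is a correct reduction plus correct soft consequences wrapped around an unproved core, so it does not yet constitute a proof.
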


\section{Main results}
\label{sec:04-ns}

Here, we prove some properties of the curvature tensor and covariant derivatives of the tensors $\varphi$ and $h$,
and prove two theorems characterizing Sasakian manifolds in the class of weak almost contact metric manifolds.

First, we consider weak almost contact metric manifolds with the condition \eqref{E-nS-10} and
characterize Sasakian manifolds in this class using the property \eqref{E-nS-Sas}.

\begin{Theorem}\label{T-0.1}
Let $M(\varphi, Q, \xi, \eta, g)$ be a weak almost contact metric manifold with conditions \eqref{E-nS-Sas} and \eqref{E-nS-10}.
Then $Q={\rm id}_{\,TM}$ and the structure $(\varphi, \xi, \eta, g)$ is Sasakian.
\end{Theorem}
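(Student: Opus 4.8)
The plan is to differentiate the structural identities and feed in both hypotheses \eqref{E-nS-Sas} and \eqref{E-nS-10}, using $\varphi^2$ as the bridge between $\varphi$ and $Q$. First I would record the purely algebraic consequence of \eqref{E-nS-2.2} and the skew-symmetry of $\varphi$, namely
\begin{equation}\label{P-phi2}
 \varphi^2 = -Q + \eta\otimes\xi .
\end{equation}
This is the identity that ties the ``weak'' tensor $Q$ to $\varphi$, so any control on $\nabla\varphi$ coming from \eqref{E-nS-Sas} will translate into control on $\nabla Q=\nabla\widetilde Q$, which is in turn governed by \eqref{E-nS-10}.

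Next I would extract the covariant derivative of $\xi$ from \eqref{E-nS-Sas}. Evaluating \eqref{E-nS-Sas} at $Y=\xi$ and using $\varphi\xi=0$ gives $\varphi(\nabla_X\xi)=X-\eta(X)\xi$; applying $\varphi$ once more, invoking \eqref{P-phi2} together with $g(\nabla_X\xi,\xi)=\tfrac12\,X(g(\xi,\xi))=0$, yields $\nabla_X\xi=-Q^{-1}\varphi X$. Here $Q^{-1}$ is well defined, self-adjoint, and commutes with $\varphi$ because $Q$ does; in the classical case $Q={\rm id}_{\,TM}$ this reduces to the familiar $\nabla_X\xi=-\varphi X$, and it is exactly the ingredient needed to differentiate the term $\eta\otimes\xi$ in \eqref{P-phi2}.

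The heart of the argument is to compute $(\nabla_X\varphi^2)Y$ in two ways and equate them. On one hand, the Leibniz rule $(\nabla_X\varphi^2)Y=(\nabla_X\varphi)(\varphi Y)+\varphi((\nabla_X\varphi)Y)$ together with \eqref{E-nS-Sas} and the relations $\eta\circ\varphi=0$, $\varphi\xi=0$ gives
\begin{equation*}
 (\nabla_X\varphi^2)Y = g(X,\varphi Y)\,\xi-\eta(Y)\,\varphi X .
\end{equation*}
On the other hand, differentiating \eqref{P-phi2} and substituting $\nabla_X\xi=-Q^{-1}\varphi X$ expresses the same quantity through $(\nabla_X\widetilde Q)Y$ and terms in $Q^{-1}\varphi X$. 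Equating the two expressions and restricting to $Y\in\ker\eta$, where \eqref{E-nS-10} annihilates $(\nabla_X\widetilde Q)Y$ and $\eta(Y)=0$, collapses everything to the single relation $g(\varphi X,\,Q^{-1}Y-Y)=0$ for all $X\in\mathfrak{X}_M$ and $Y\in\ker\eta$.

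Finally I would convert this into $Q={\rm id}_{\,TM}$. The restriction $\varphi|_{\ker\eta}$ is injective, since $\varphi Z=0$ with $Z\in\ker\eta$ forces $\varphi^2 Z=-QZ=0$ by \eqref{P-phi2}, hence $Z=0$ by nonsingularity of $Q$; being a self-map of the finite-dimensional space $\ker\eta$, it is therefore onto $\ker\eta$. As $Q^{-1}Y-Y$ lies in $\ker\eta$ (which is $Q$-invariant, hence $Q^{-1}$-invariant) and is orthogonal to all of $\ker\eta$, it must vanish, giving $QY=Y$ on $\ker\eta$. Combined with $Q\xi=\xi$ (from $Q$ self-adjoint preserving the line $[\xi]=(\ker\eta)^\perp$, together with $\eta\circ Q=\eta$) this forces $Q={\rm id}_{\,TM}$, so $\widetilde Q=0$; then \eqref{E-nS-2.2} becomes the classical compatibility $g(\varphi X,\varphi Y)=g(X,Y)-\eta(X)\eta(Y)$ and \eqref{E-nS-Sas} is precisely the classical Sasakian identity, so $(\varphi,\xi,\eta,g)$ is Sasakian. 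I expect the only genuinely delicate step to be this last passage: correctly pinning down the $\xi$-direction and exploiting surjectivity of $\varphi$ on $\ker\eta$ to eliminate the quantifier over $X$; the preceding differentiations are routine once \eqref{P-phi2} and $\nabla_X\xi=-Q^{-1}\varphi X$ are established.
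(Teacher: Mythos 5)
Your proof is correct, and it reaches the conclusion by a route that is computationally parallel to the paper's but organized quite differently. The paper's proof works with the tensor $h=\nabla\xi+\varphi$ from \eqref{E-c-01}: differentiating $g(\varphi Y,\xi)=0$ and using \eqref{E-nS-Sas} with \eqref{E-nS-2.2} yields $h\varphi=\varphi h=-\widetilde Q$ (equation \eqref{E-sol-1}), which is exactly equivalent to your relation $\nabla_X\xi=-Q^{-1}\varphi X$ once one unwinds $\varphi(\nabla_X\xi)+\varphi^2X=-\widetilde QX$ via $\varphi^2=-Q+\eta\otimes\xi$; then the paper differentiates \eqref{E-nS-2.2} (the same computation as your differentiation of $\varphi^2=-Q+\eta\otimes\xi$, since the two identities are equivalent), takes $Y=\xi$ to conclude $h=0$ and hence $\widetilde Q=0$, and finally invokes Proposition~\ref{Th-4.1}. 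You instead restrict the differentiated identity to $Y\in\ker\eta$, where \eqref{E-nS-10} kills $(\nabla_X\widetilde Q)Y$, and extract $Q=\mathrm{id}$ on $\ker\eta$ directly from the surjectivity of $\varphi|_{\ker\eta}$ (your injectivity argument via $\varphi^2Z=-QZ$ is sound), then close with Blair's classical characterization of Sasakian structures by \eqref{E-nS-Sas}. What your version buys: it never introduces $h$, never needs the intermediate statement $h=0$, and does not rely on Proposition~\ref{Th-4.1} (imported from the companion paper), so it is more self-contained; what the paper's version buys is uniformity with the rest of the article, where $h$ and Proposition~\ref{Th-4.1} are the working tools (notably in the proof of Theorem~\ref{Th-4.5}, whose endgame is structured the same way). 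All the individual steps you flag as delicate check out, including $Q\xi=\xi$, the $Q$- and $Q^{-1}$-invariance of $\ker\eta$, and the vanishing of $g(\nabla_X\xi,\xi)$.
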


\begin{proof}
Differentiating the equality $g(\varphi\,Y,\xi)=0$ yields
\[
 0=X g(\varphi\,Y,\xi) = g((\nabla_X\,\varphi)Y, \xi) +g(\varphi\,Y, (h-\varphi)X).
\]
Applying to this \eqref{E-nS-Sas} and then \eqref{E-nS-2.2} gives $h\varphi = -\widetilde Q$. Since $h,\varphi$ are skew-symmetric and $\widetilde Q$ is self-adjoint, we also get $\varphi h = -\widetilde Q$. Therefore,
\begin{equation}\label{E-sol-1}
  h\varphi = \varphi h = -\widetilde Q.
\end{equation}
Differentiating \eqref{E-nS-2.2} and using \eqref{E-nS-10}, \eqref{E-sol-1}  and the skew-symmetry of $\nabla_X\,\varphi$, we get
\[
 -\eta(Y)\,g(h X,Z) - \eta(Z)\,g( \varphi X + Q\,(h - \varphi)X, Y) = 0.
\]
Taking $Y=\xi$ in the above equality, we get $g(h X, Z)=0$ for all $X,Z\in\mathfrak{X}_M$.
Therefore, $h=\widetilde Q=0$ and by Proposition~\ref{Th-4.1}, the structure is Sasakian.
\end{proof}

The following statement generalizes Lemma 2.4 in \cite{Ol-1979}.

\begin{Lemma}
For covariant derivatives of the tensor $\varphi$ on a weak nearly Sasakian manifold with the condition \eqref{E-nS-10} we obtain
\begin{align}\label{E-3.29}
 & g((\nabla_X\,\varphi)\varphi Y, Z) = g((\nabla_X\,\varphi)Y, \varphi Z) + \eta(Y)\,g( (h-\varphi) X, Z)
 + \eta(Z)\,g((h-\varphi) X, Q Y) , \\
\label{E-3.30}
\nonumber
 & g((\nabla_{\varphi X}\,\varphi)Y, Z) = g((\nabla_X\,\varphi)\,Y, \varphi Z) - \eta(X)\,g(h Y, Z) - 2\,\eta(Y)\,g(\varphi X, Z) \\
 & \quad\qquad +\,2\,\eta(Z)\,g(\varphi X, Y) - \eta(Z)\,g(Q X, (h-\varphi) Y) .\\
\label{E-3.31}
\nonumber
 & g((\nabla_{\varphi X}\,\varphi)\,\varphi Y, Z)
 = -g((\nabla_X\,\varphi)Y, Q Z) +\eta(X)\,g(Y, h\varphi Z) +\eta(Y)\,g(h\varphi X + \varphi^2 X, Z) \\
 &\quad\qquad +\,\eta(Z)\,g(\varphi(h-\varphi) X, Y)  + \eta(Z)\,g((h-\varphi)\varphi X, Q Y) .
\end{align}
\end{Lemma}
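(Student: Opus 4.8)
The plan is to obtain all three formulas from one algebraic identity together with the defining symmetric relation \eqref{E-nS-00b}, the skew-symmetry \eqref{E-nS-05e} of $\nabla_X\varphi$, and the hypothesis \eqref{E-nS-10}. The algebraic identity is
\[
 \varphi^2=-Q+\eta\otimes\xi ,
\]
which follows immediately from \eqref{E-nS-2.2} and the skew-symmetry of $\varphi$ (write $g(\varphi X,\varphi Y)=-g(X,\varphi^2 Y)$ and compare with \eqref{E-nS-2.2}). Before differentiating it, I would record how $Q$ differentiates. Since $\eta\circ Q=\eta$ and the self-adjointness of $Q$ force $Q\,\xi=\xi$, hence $\widetilde Q\,\xi=0$, and since $\nabla_X\,\xi=(h-\varphi)X$, the Leibniz rule gives $(\nabla_X\,\widetilde Q)\,\xi=-\widetilde Q\,(h-\varphi)X$. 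Combining this with \eqref{E-nS-10}, which kills $\nabla\widetilde Q$ on $\ker\eta$, and with $\nabla\,{\rm id}=0$, I obtain
\[
 (\nabla_X Q)Y=-\eta(Y)\,\widetilde Q\,(h-\varphi)X
\]
for all $Y$. This is the one step where the weak setting genuinely enters.

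For \eqref{E-3.29} I would differentiate $\varphi^2=-Q+\eta\otimes\xi$. The Leibniz rule gives $(\nabla_X\varphi^2)Y=(\nabla_X\varphi)\varphi Y+\varphi\,(\nabla_X\varphi)Y$, while the right-hand side is computed from the displayed formula for $\nabla_X Q$ together with $(\nabla_X\eta)Y=g((h-\varphi)X,Y)$ and $\nabla_X\,\xi=(h-\varphi)X$. Solving for $(\nabla_X\varphi)\varphi Y$, pairing with $Z$, and moving $\varphi$ off the first slot by skew-symmetry, namely $-g(\varphi\,(\nabla_X\varphi)Y,Z)=g((\nabla_X\varphi)Y,\varphi Z)$, yields \eqref{E-3.29}; its two correction terms are precisely the $\eta(Y)$- and $\eta(Z)$-components generated by differentiating $Q$ and $\eta\otimes\xi$.

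I would then bootstrap \eqref{E-3.30} and \eqref{E-3.31} from \eqref{E-3.29}. For \eqref{E-3.30}: apply \eqref{E-nS-00b} (with arguments $\varphi X$ and $Y$) to trade $(\nabla_{\varphi X}\varphi)Y$ for $-(\nabla_Y\varphi)\varphi X$ plus $\eta$-terms; rewrite $(\nabla_Y\varphi)\varphi X$ by \eqref{E-3.29}, with $Y$ now the derivative direction and $X$ the argument; and use \eqref{E-nS-00b} once more to replace the remaining $(\nabla_Y\varphi)X$ by $-(\nabla_X\varphi)Y$ plus $\eta$-terms. Collecting and simplifying with $\eta\circ\varphi=0$, the skew-symmetry of $\varphi$ and $h$, and the commutations $\varphi Q=Q\varphi$ and $hQ=Qh$ produces \eqref{E-3.30}, the $\eta(X)$-term being assembled from a $\varphi Y$ and an $(h-\varphi)Y$ contribution. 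For \eqref{E-3.31}: apply \eqref{E-3.29} in the direction $\varphi X$ to reduce $(\nabla_{\varphi X}\varphi)\varphi Y$ to $(\nabla_{\varphi X}\varphi)Y$ paired against $\varphi Z$, then substitute \eqref{E-3.30} with $Z$ replaced by $\varphi Z$; the compositions collapse through $\varphi^2 Z=-QZ+\eta(Z)\xi$ and $g(\varphi X,\varphi Z)=g(X,QZ)-\eta(X)\eta(Z)$ from \eqref{E-nS-2.2}, and the $\eta(X)$-term is identified via $g(hY,\varphi Z)=-g(Y,h\varphi Z)$.

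The main obstacle is not conceptual but the careful bookkeeping of $Q$ and $\widetilde Q$ in the lower-order correction terms: in the classical case $\widetilde Q=0$ these subtleties vanish, so in the weak case each such term must be placed using $\varphi^2=-Q+\eta\otimes\xi$, $Q\,\xi=\xi$, and the commutations $\varphi Q=Q\varphi$, $hQ=Qh$, with no room for a misplaced $Q$. A reliable check at each stage is to set one free argument equal to $\xi$ and require the two sides to agree; using $\varphi\,\xi=0$ together with $g(\varphi A,\varphi Z)=g(A,QZ)$ for $A\in\ker\eta$ (a case of \eqref{E-nS-2.2}), this isolates and fixes the position of $Q$ in every correction term.
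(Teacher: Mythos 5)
Your proposal is correct and follows essentially the same route as the paper: \eqref{E-3.29} is obtained by differentiating the compatibility identity \eqref{E-nS-2.2} (your operator form $\varphi^2=-Q+\eta\otimes\xi$ is just its equivalent restatement) together with \eqref{E-nS-10} and the skew-symmetry of $\nabla_X\varphi$, and then \eqref{E-3.30} and \eqref{E-3.31} follow by exactly the substitutions you describe, i.e., by feeding \eqref{E-nS-00b} into \eqref{E-3.29} and then replacing $X$ by $\varphi X$. The only delicate point, which your $\xi$-substitution check would catch, is the placement of $Q$ in the $\eta$-correction terms: it is immaterial here because your formula $(\nabla_X Q)Y=-\eta(Y)\,\widetilde Q\,(h-\varphi)X$ together with the self-adjointness of $\nabla_X Q$ forces $\widetilde Q\,(h-\varphi)=0$, so the competing placements coincide.
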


\smallskip\textbf{Proof}.
Differentiating \eqref{E-nS-2.2} and using \eqref{E-nS-01b}, \eqref{E-nS-10} and the skew-symmetry of $\nabla_X\,\varphi$, we get \eqref{E-3.29}.
We obtain \eqref{E-3.30} from \eqref{E-3.29} by the condition~\eqref{E-nS-00b}.
Replacing $X$ by $\varphi X$ in \eqref{E-3.29} gives
\[
  g((\nabla_{\varphi X}\,\varphi)\,\varphi Y, Z)
 = g((\nabla_{\varphi X}\,\varphi)\,Y, \varphi Z) + \eta(Y)\,g((h-\varphi)\varphi X, Z) + \eta(Z)\,g((h-\varphi)\varphi X, Q Y),
\]
from which, using \eqref{E-3.30} and \eqref{E-nS-2.2}, we get \eqref{E-3.31}.
\hfill$\square$

\begin{Lemma}\label{L-R01b}
The curvature tensor of a weak nearly Sasakian manifold satisfies
\begin{align}\label{E-3.4}
\nonumber
 & g(R_{\,\varphi X,Y}Z, V) +g(R_{\,X,\varphi Y}Z, V) +g(R_{\,X,Y}\,\varphi Z, V) +g(R_{\,X,Y}Z, \varphi V) \\
\nonumber
 & = g(Y,V)\,g((h-\varphi)X,Z)-g(X,Y)\,g(Z, (h-\varphi)V) + g(Y,Z)\,g(X, (h-\varphi)V) \\
 &-(1/2)\,g(Z,V)\,g((h-\varphi)X,Y) +(1/2)\,g(X,Z)\,g(Y, (h-\varphi)V)\quad (X,Y,Z,V\in\mathfrak{X}_M).
\end{align}
The curvature tensor of a weak nearly Sasakian manifold with the condition \eqref{E-nS-04c} satisfies
\begin{align}
\label{E-3.6}
\nonumber
 & g(R_{\,\varphi X,\varphi Y}Z, V) = g(R_{\,X,Y}\,\varphi Z, \varphi V) -(1/2)\,\delta(X,Y,Z,V) - g(X,Y)\,g(Z,\widetilde Q V) \\
\nonumber
 &\ +(1/2)\,g(Y,V)\,g(\varphi(h-\varphi)X, Z) - (1/2)\,g(Y,\varphi Z)\,g(X, (h-\varphi)V) \\
\nonumber
 &\ - (1/2)\,g(Y,\varphi V)\,g((h-\varphi)X,Z)  - (1/2)\,g(Y,Z)\,g(X, (h-\varphi)\varphi V) \\
 &\ -(1/4)\,g(X,\varphi Z)\,g(Y, (h-\varphi)V) -(1/4)\,g(X,Z)\,g(Y, (h-\varphi)\varphi V) , \\
\label{E-3.5}
\nonumber
 & g(R_{\,\varphi X,\varphi Y}\,\varphi Z, \varphi V) = g(R_{\,Q X, Q Y}Z, V) - \eta(X)\,g(R_{\,\xi,Q Y}Z, V) + \eta(Y)\,g(R_{\,\xi, Q X}Z, V) \\
\nonumber
 &\ +(1/2)\,g(\varphi Y,V)\,g((h-\varphi)\varphi X,\varphi Z) + (1/2)\,g(\varphi^2 Y, Z)\,g(X, \varphi(h-\varphi) V) \\
\nonumber
 &\ - (1/2) g(\varphi^2 Y,V) g((h-\varphi)\varphi X,Z) + (1/2) g(\varphi Y,Z) g(\varphi X, (h-\varphi)\varphi V) \\
\nonumber
 &\ +(1/4)\,g(\varphi^2 X, Z)\,g(Y, \varphi(h-\varphi)V) +(1/4)\,g(\varphi X,Z)\,g(\varphi Y, (h-\varphi)\varphi V)  \\
 &\ - g(\varphi^2 X, Y) g(Z, \widetilde Q V) +(1/2)\,\delta(\varphi X,\varphi Y,Z,V) ,
\end{align}
where the ``small" tensor $\delta$ is defined by
\begin{equation*}
 \delta(X,Y,Z,V) = g(R_{\,X, Y}\widetilde Q Z, V) +g(R_{\,X, Y}Z, \widetilde Q V) -g(R_{\widetilde Q X, Y}Z, V) -g(R_{\,X, \widetilde Q Y}Z, V) .
\end{equation*}
\end{Lemma}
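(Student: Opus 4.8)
The plan is to derive all three identities from a single \emph{master relation} obtained by covariantly differentiating the defining condition \eqref{E-nS-00b}, which is then fed into the Ricci commutation formula \eqref{E-nS-05}. Differentiating \eqref{E-nS-00b} in an arbitrary direction $X$, using $\nabla_X\xi=(h-\varphi)X$ from \eqref{E-c-01} and $(\nabla_X\eta)(\cdot)=g((h-\varphi)X,\cdot)$, turns the left-hand side into a symmetrized second derivative and gives
\begin{equation*}
(\nabla^2_{X,Y}\,\varphi)Z+(\nabla^2_{X,Z}\,\varphi)Y=2\,g(Y,Z)\,(h-\varphi)X-g((h-\varphi)X,Z)\,Y-g((h-\varphi)X,Y)\,Z .
\end{equation*}
This relation, symmetric in $Y,Z$, is the only place where \eqref{E-nS-00b} enters.

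To obtain \eqref{E-3.4}, I would read its four-term left-hand side as the derivation of $R$ by $\varphi$. Using the pair symmetry $g(R_{A,B}C,D)=g(R_{C,D}A,B)$, the two terms carrying $\varphi$ in a curvature slot, $g(R_{\varphi X,Y}Z,V)+g(R_{X,\varphi Y}Z,V)$, rewrite as $g(R_{Z,V}\varphi X,Y)+g(R_{Z,V}X,\varphi Y)$, so that all four terms become instances of the Ricci identity \eqref{E-nS-05}. Thus the whole left-hand side equals a sum of two antisymmetrized second derivatives, $(\nabla^2_{Z,V}-\nabla^2_{V,Z})\varphi$ paired with $X,Y$ and $(\nabla^2_{X,Y}-\nabla^2_{Y,X})\varphi$ paired with $Z,V$. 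Each antisymmetrization is then evaluated in lower order by combining the master relation with the skew-symmetry of $\nabla^2\varphi$; the second-derivative contributions cancel and the $(h-\varphi)$-expression on the right of \eqref{E-3.4} remains.

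Identities \eqref{E-3.6} and \eqref{E-3.5} are then obtained by iterating \eqref{E-3.4}, each step transferring a $\varphi$ from an acted slot of $R$ into a curvature-direction slot by means of $\varphi^2=-Q+\eta\otimes\xi$, the weak form (coming from \eqref{E-nS-2.2}) of $\varphi^2=-{\rm id}+\eta\otimes\xi$. For \eqref{E-3.6} I would substitute $\varphi X,\varphi Y$ into \eqref{E-3.4}; writing $-Q=-{\rm id}-\widetilde Q$ splits each term into a principal part reproducing $g(R_{X,Y}\varphi Z,\varphi V)$ and a $\widetilde Q$-part, and here condition \eqref{E-nS-04c} is what keeps the $\widetilde Q$-curvature terms inside $\ker\eta$ and assembles them into the ``small'' tensor $\delta$. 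For \eqref{E-3.5} the same device is applied once more, now also on $Z,V$: the $-Q$ part produces the principal term $g(R_{QX,QY}Z,V)$, while the $\eta\otimes\xi$ part of $\varphi^2$ yields the corrections $\eta(X)\,g(R_{\xi,QY}Z,V)$ and $\eta(Y)\,g(R_{\xi,QX}Z,V)$, which are controlled through the $\xi$-curvature formulas \eqref{E-3.24} and \eqref{E-3.23b} of Lemma~\ref{L-nS-04}. The lower-order $\nabla\varphi$-remainders generated along the way are simplified using \eqref{E-3.29}--\eqref{E-3.31} and \eqref{E-nS-01b}.

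The main obstacle is bookkeeping rather than conceptual. The genuine content lies in the complete cancellation of the second-covariant-derivative terms in the passage to \eqref{E-3.4}, which succeeds only when the master relation and the skew-symmetry of $\nabla^2\varphi$ are applied in exactly the right combination; and, for \eqref{E-3.6}--\eqref{E-3.5}, in correctly isolating $\delta$, that is, tracking every occurrence where $\widetilde Q$ fails to commute through $R$. In the classical case $\widetilde Q=0$ and $\delta=0$, so these weak terms are precisely the new difficulty, and \eqref{E-nS-04c} is the structural hypothesis that keeps them under control.
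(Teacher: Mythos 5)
Your overall route is the same as the paper's (it is Olszak's method carried over to the weak setting): differentiate \eqref{E-nS-00b} to obtain the symmetrized relation for $\nabla^2\varphi$ (the paper's \eqref{EF-nS-01}), feed it into the Ricci identity \eqref{E-nS-05} to get \eqref{E-3.4}, and then derive \eqref{E-3.6} and \eqref{E-3.5} by substituting $\varphi$-arguments and using $\varphi^2=-Q+\eta\otimes\xi$ from \eqref{E-nS-2.2}, with \eqref{E-nS-04c} (through \eqref{E-3.23b}) controlling the $\xi$-components and the failure of $\widetilde Q$ to commute through $R$ collected into $\delta$. That part of your plan matches the paper essentially step for step.

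The gap is in the mechanism you give for \eqref{E-3.4}. Writing $S(A,B,C,D)=g((\nabla^2_{A,B}\,\varphi)C,D)$, your reduction turns the left-hand side into $S(X,Y,Z,V)-S(Y,X,Z,V)+S(Z,V,X,Y)-S(V,Z,X,Y)$, and you claim these second-derivative contributions cancel ``by combining the master relation with the skew-symmetry of $\nabla^2\varphi$''. But the master relation symmetrizes only the second and third slots of $S$, and the skew-symmetry acts only on the third and fourth; together they generate permutations of the last three arguments and never move the differentiation direction $A$. Your four terms carry four distinct differentiation directions ($X$, $Y$, $Z$, $V$), so no two of them lie in the same orbit under these operations and no cancellation can occur from those two relations alone. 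To interchange the first slot with another one you must re-apply the Ricci identity, which reintroduces curvature terms, and those must then be absorbed using the first Bianchi identity; the paper does exactly this in \eqref{E-3.7}--\eqref{E-3.8}, reaches \eqref{E-3.11}, and only then eliminates the one remaining $\nabla^2$-term by antisymmetrizing in $Y$ and $V$ (subtracting \eqref{E-3.12}) and converting the difference back to curvature via Ricci. Without the Bianchi step and this final antisymmetrization your cancellation does not go through. A smaller point: \eqref{E-3.6} is not obtained by a single simultaneous substitution $X\to\varphi X$, $Y\to\varphi Y$ in \eqref{E-3.4} --- that produces $g(R_{\,\varphi X,\varphi Y}\,\varphi Z,V)+g(R_{\,\varphi X,\varphi Y}Z,\varphi V)$ rather than the combination $g(R_{\,\varphi X,\varphi Y}Z,V)-g(R_{\,X,Y}\,\varphi Z,\varphi V)$; one needs the two separate substitutions $X\to\varphi X$ and $Z\to\varphi Z$, $V\to\varphi V$, combined through an $X\leftrightarrow Y$ antisymmetrization as in \eqref{E-3.13}--\eqref{E-3.18}.
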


\begin{Remark}\label{Rem-delta}\rm
The tensor $\delta$ of a weak nearly Sasakian manifold has the following symmetries:
\begin{align*}
 \delta(Y,X, Z,V) = \delta(X,Y, V,Z) = \delta(Z,V, X,Y) = -\delta(X,Y, Z,V) .
\end{align*}
If \eqref{E-nS-04c} is true, then by \eqref{E-3.23b}, we get
\begin{align*}
 & \delta(X,Y,Z,\xi)
 = g(R_{\,X, Y}\,\widetilde Q Z, \xi) -g(R_{\widetilde Q X, Y}Z, \xi) -g(R_{\,X, \widetilde Q Y}Z, \xi) \\
 &\qquad
 = g(R_{\,\xi,\, \widetilde Q Z}Y, X) +g(R_{\,\xi, Z}\,\widetilde Q X, Y) +g(R_{\,\xi, Z}\,X, \widetilde Q Y)
 = 0 ,
\end{align*}
hence, $\delta(\xi,Y,Z,V)=\delta(X,\xi,Z, V)=\delta(X,Y,\xi, V)=\delta(X,Y,Z, \xi)=0$.
\end{Remark}

The following lemma generalizes the result obtained in \cite{Ol-1979}, see also equation (6) in \cite{NDY-2018}.

\begin{Lemma}\label{L-R03}
For a weak nearly Sasakian manifold with the conditions \eqref{E-nS-10} and \eqref{E-nS-04c}, we obtain
\begin{align}\label{E-3.50c}
  g((\nabla_{X}\,\varphi)Y, h Z) = -\eta(X)\,g( (\varphi h^2 + \widetilde Q h) Z, Y) +\eta(Y)\,g( (\varphi h^2 - h +\widetilde Q h) Z, X) .
\end{align}
\end{Lemma}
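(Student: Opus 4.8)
```latex
\textbf{Proof proposal.}
The plan is to derive \eqref{E-3.50c} by computing the quantity
$g((\nabla_X\,\varphi)Y, hZ)$ through the curvature identities already
established, using the tensor $h$ as the natural bridge between
covariant derivatives of $\varphi$ and the curvature. The key structural
fact I would exploit is Lemma~\ref{L-nS-04}, particularly
\eqref{E-3.24}, which expresses $R_{X,\xi}Y$ directly in terms of
$(\nabla_X(h-\varphi))Y$, and the identity $\nabla_\xi\,\varphi =
\varphi h + \widetilde Q$. Since $h$ is skew-symmetric and commutes with
both $\varphi$ and $Q$ (by \eqref{E-nS-01d} and the consequences noted
after Lemma~\ref{L-nS-02}), any expression of the form $g(\cdot\,, hZ)$
can be rewritten as $-g(h\,\cdot\,, Z)$, which lets me move $h$ onto the
derivative term and then apply the commutation formula \eqref{E-nS-05}.

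First I would write $g((\nabla_X\,\varphi)Y, hZ) = -g(h(\nabla_X\,\varphi)Y, Z)$
and relate $h(\nabla_X\,\varphi)Y$ to $(\nabla_X\,\varphi)hY$ plus a
correction term coming from $(\nabla_X h)$; here I would use
\eqref{E-nS-01a} to control $(\nabla_X h)\xi$ and more generally handle
the non-commutativity of $h$ with $\nabla_X$. Second, I would invoke the
skew-symmetry of $\nabla^2_{X,Y}\,\varphi$ recorded just after
\eqref{E-nS-05}, together with the Ricci identity \eqref{E-nS-05}, to
convert second covariant derivatives of $\varphi$ into curvature terms
$g(R_{X,Y}\varphi V, Z) + g(R_{X,Y}V,\varphi Z)$. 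Third, I would special
the roles so that one slot becomes $\xi$, thereby triggering
\eqref{E-3.24} and \eqref{E-3.23b}, which inject the operators
$(h-\varphi)^2$ and hence, after expansion with \eqref{E-nS-01b}, the
combinations $\varphi h^2$ and $\widetilde Q h$ that appear on the
right-hand side of \eqref{E-3.50c}. The terms $\eta(X)$ and $\eta(Y)$ on
the right arise naturally from the $\xi$-contractions, with the isolated
$-h$ inside the $\eta(Y)$ coefficient emerging from the $-\varphi$ part
of $(h-\varphi)^2 = h^2 - h\varphi - \varphi h + \varphi^2$ combined with
$h\varphi+\varphi h = -2\widetilde Q$.

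I expect the main obstacle to be the careful bookkeeping of the
$\widetilde Q$-terms: because $Q \neq \mathrm{id}$ in the weak setting,
identities such as \eqref{E-nS-2.2} and $h\varphi + \varphi h =
-2\widetilde Q$ produce extra $\widetilde Q$-contributions that must be
tracked through each substitution, and condition \eqref{E-nS-10}
($(\nabla_X\widetilde Q)Y = 0$ on $\ker\eta$) together with
$\nabla_\xi\widetilde Q = 0$ is exactly what is needed to keep these
under control. The delicate point is ensuring that the coefficient of
$h$ (as opposed to $\varphi h^2$ or $\widetilde Q h$) comes out correctly
in the $\eta(Y)$ term, since a sign or a factor there reflects the
interplay between the $-\varphi$ in $h-\varphi$ and the antisymmetrization
in the Ricci identity. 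To organize the computation I would verify
\eqref{E-3.50c} first on the eigendistributions $D_i$ of $h^2$ from
\eqref{E-nS-11b}, where $h^2 = -\lambda_i^2\,\mathrm{id}$ simplifies the
operator algebra, and separately check the $\xi$- and $D_0$-directions
using $h|_{D_0}=0$ and $\widetilde Q|_{D_0}=0$; by linearity this
reassembles into the stated identity. A useful consistency check is that
contracting \eqref{E-3.50c} with $Z=\xi$ gives $0$ on both sides, in
agreement with $h\xi=0$.
```
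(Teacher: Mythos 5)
There is a genuine gap: your proposal names the right toolbox but never produces the actual mechanism that makes \eqref{E-3.50c} true. The real content of the lemma is that the right-hand side contains \emph{only} $\eta(X)$- and $\eta(Y)$-terms, i.e.\ that $g((\nabla_X\varphi)Y,hZ)=0$ whenever $X,Y\in\ker\eta$; this is a strong vanishing statement, and nothing in your outline derives it. Your opening move, $g((\nabla_X\varphi)Y,hZ)=-g(h(\nabla_X\varphi)Y,Z)$ followed by commuting $h$ past $\nabla_X$, produces (after using $h\varphi+\varphi h=-2\widetilde Q$ and \eqref{E-3.24}) the term $g((\nabla_X\varphi)hY,Z)$ together with terms controlled by \eqref{E-3.29}; the new term is of exactly the same type as the one you started with, so the computation does not close. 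Likewise, the Ricci identity \eqref{E-nS-05} only becomes available after you differentiate a first-derivative identity to create $\nabla^2\varphi$-terms; the paper does this by differentiating \eqref{E-3.29} and then runs a long chain (\eqref{E-3.34}--\eqref{E-3.43}, two specializations at $V=\xi$, and several symmetrizations using \eqref{E-nS-00b} and \eqref{E-3.24}) before the horizontal part collapses in \eqref{E-3.50x}. Your sketch skips precisely this chain, which is where all the cancellation happens.

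Two further points. First, your suggestion to verify the identity separately on the eigendistributions of $h^2$ does not rescue the argument: trilinearity lets you reduce to $X,Y,Z$ in fixed eigendistributions, but the hard cases are the cross-terms with $X\in D_i$, $Y\in D_j$, $i\ne j\ne 0$, where neither $h|_{D_0}=0$ nor the totally geodesic foliations of Theorem~\ref{T-2.2} control $(\nabla_X\varphi)Y$; moreover, the eigendistribution case analysis is exactly the technique the paper uses to prove \eqref{EC-14}, whose proof \emph{begins} by invoking \eqref{E-3.50c}, so leaning on that decomposition here courts circularity. Second, the part of your plan that does match the paper is the endgame: once one knows the identity has the form \eqref{E-3.50} with only $\eta(X)$- and $\eta(Y)$-coefficients, setting $X=\xi$ (using $\nabla_\xi\varphi=\varphi h+\widetilde Q$) and $Y=\xi$ (using \eqref{E-nS-01c}) pins down the operators, and the nonsingularity of $Q$ removes the extra $Q$ from the $hQZ$ slot. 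That final step is correct in your proposal, but it presupposes the vanishing result you have not proved.
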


The proofs of Lemmas~\ref{L-R01b}--\ref{L-R03} are given in Section~\ref{sec:app}.
Next, we generalize \cite[Proposition~3.1]{NDY-2018}.

\begin{Proposition}
For a weak nearly Sasakian manifold with conditions \eqref{E-nS-10} and \eqref{E-nS-04c}, we obtain
\begin{align}
\label{EC-14}
 & (\nabla_X\,\varphi)Y = \eta(X)\,(\varphi  h Y +\widetilde Q Y) -\eta(Y)\,(\varphi h X + Q X) + g(\varphi hX + Q X, Y)\,\xi, \\
\label{EC-15}
 & (\nabla_X\,h)Y = \eta(X)\,(\varphi h Y +\widetilde Q Y) -\eta(Y)\,h (h - \varphi) X + g( h(h - \varphi) X, Y)\,\xi,\\
\label{EC-16}
\nonumber
 & (\nabla_X\,\varphi h)Y = \eta(X)\,(\varphi h^2 Y - hY + \widetilde Q \varphi Y )
  -\eta(Y)\,g(\varphi h^2 X - Q h X + 2\,\widetilde Q\varphi X ) \\
 &\quad + g(\varphi h^2 X - hX + \widetilde Q hX, Y)\,\xi.
\end{align}
\end{Proposition}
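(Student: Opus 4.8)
The plan is to prove \eqref{EC-14} first and then deduce \eqref{EC-15} and \eqref{EC-16} from it by purely algebraic manipulation. Write $T(X,Y,Z)=g((\nabla_X\varphi)Y,Z)$ and let $B(X,Y)$ denote the proposed right-hand side of \eqref{EC-14}, so that the goal is $T(X,Y,Z)=g(B(X,Y),Z)$ for all $X,Y,Z$. First I would record the two relations constraining $T$: the symmetric part is fixed by \eqref{E-nS-00b}, giving $T(X,Y,Z)+T(Y,X,Z)=2g(X,Y)\eta(Z)-\eta(Y)g(X,Z)-\eta(X)g(Y,Z)$, while \eqref{E-nS-05e} gives the skew-symmetry $T(X,Y,Z)=-T(X,Z,Y)$. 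A direct check shows the candidate $g(B(X,Y),Z)$ satisfies both identities; this computation only uses $\widetilde Q=Q-\mathrm{id}$, the self-adjointness of $Q$ and $\widetilde Q$, and $h\varphi+\varphi h=-2\widetilde Q$ from \eqref{E-nS-01b}. Consequently the difference $D(X,Y,Z):=T(X,Y,Z)-g(B(X,Y),Z)$ is skew in $(X,Y)$ and skew in $(Y,Z)$; since the transpositions $(12)$ and $(23)$ generate $S_3$, $D$ is a totally antisymmetric $3$-form.

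It then remains to show $D\equiv0$, and this is the step I expect to be the main obstacle, because the two symmetry relations alone leave the genuinely skew part of $\nabla\varphi$ undetermined. The extra input is Lemma~\ref{L-R03}: substituting $hZ$ in the last slot of the candidate and simplifying with \eqref{E-nS-01b}, \eqref{E-nS-01d} and $\widetilde Q h=h\widetilde Q$, one finds that $g(B(X,Y),hZ)$ coincides exactly with the right-hand side of \eqref{E-3.50c}, so $D(X,Y,hZ)=0$ for all $Z$. Because $D$ is totally antisymmetric, it therefore vanishes whenever one argument lies in $\mathrm{Im}\,h=D_1\oplus\dots\oplus D_r$. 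A separate direct computation of $g((\nabla_X\varphi)Y,\xi)$ using $\nabla_X\xi=(h-\varphi)X$ from \eqref{E-c-01} and $\varphi^2=-Q+\eta\otimes\xi$ from \eqref{E-nS-2.2} shows $D$ also vanishes whenever one argument is $\xi$. Hence $D$ can be nonzero only on $D_0$; but there $h=0$, $\widetilde Q=0$, $Q=\mathrm{id}$ and $\eta=0$, so $B$ vanishes, and by Theorem~\ref{T-2.2}(b) the leaves of $[\xi]\oplus D_0$ are Sasakian and totally geodesic, whence $g((\nabla_X\varphi)Y,Z)=0$ for $X,Y,Z\in D_0$. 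Thus $D\equiv0$ and \eqref{EC-14} follows.

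For \eqref{EC-15} I would use $h-\varphi=\nabla\xi$, so that $(\nabla_X h)Y=(\nabla_X\varphi)Y+(\nabla_X(h-\varphi))Y$, where the last term is supplied by \eqref{E-3.24} of Lemma~\ref{L-nS-04}. Substituting \eqref{EC-14} together with $g((h-\varphi)^2X,Y)\xi-\eta(Y)(h-\varphi)^2X$ and simplifying, the key identity is $\varphi h+Q+(h-\varphi)^2=h(h-\varphi)+\eta\otimes\xi$, which reduces to $Q+\varphi^2=\eta\otimes\xi$; the two spurious $\eta(X)\eta(Y)\xi$ terms then cancel and \eqref{EC-15} drops out.

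Finally, \eqref{EC-16} follows from the Leibniz rule $(\nabla_X(\varphi h))Y=(\nabla_X\varphi)(hY)+\varphi((\nabla_X h)Y)$ by inserting \eqref{EC-14} with $Y$ replaced by $hY$ (noting $\eta(hY)=0$) and \eqref{EC-15}, then applying $\varphi$ (using $\varphi\xi=0$). Collecting terms and simplifying the three resulting coefficients with $\varphi^2=-Q+\eta\otimes\xi$, $h\varphi+\varphi h=-2\widetilde Q$, $[Q,\varphi]=0$ and $[Q,h]=0$ produces the $\eta(X)$-, $\eta(Y)$- and $\xi$-parts of \eqref{EC-16} respectively. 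These last two derivations are routine once \eqref{EC-14} is in hand; all the difficulty is concentrated in establishing that the $D_0$-component of $\nabla\varphi$ obeys the Sasakian model.
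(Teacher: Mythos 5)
Your proof is correct and rests on the same three pillars as the paper's own argument: Lemma~\ref{L-R03} to pin down the $h$-image directions, the direct computation of $g((\nabla_X\varphi)Y,\xi)$ via \eqref{E-nS-01c}, and Theorem~\ref{T-2.2} to handle $D_0$ through the induced Sasakian structure on the totally geodesic leaves of $[\xi]\oplus D_0$. Where you genuinely diverge is in the reduction: the paper fixes the last slot (first $hZ$, then $\xi$, then $V\in D_0$) and, for $V\in D_0$, runs a four-way case analysis according to whether $X$ and $Y$ lie in $\bar D=[\xi]\oplus D_1\oplus\ldots\oplus D_r$ or in $D_0$, invoking \eqref{E-nS-00b} and skew-symmetry separately in each case. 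You instead observe that the difference $D=T-g(B(\cdot,\cdot),\cdot)$ is skew in $(Y,Z)$ by \eqref{E-nS-05e} and skew in $(X,Y)$ because $B$ reproduces the symmetric part \eqref{E-nS-00b} (both checks hinge on $h\varphi+\varphi h=-2\widetilde Q$ and the self-adjointness of $Q$), hence is a $3$-form; total antisymmetry then collapses all mixed cases at once and leaves only the all-$D_0$ component. That is a cleaner reduction which eliminates the paper's case analysis at no extra cost, and I verified that $g(B(X,Y),hZ)$ does reproduce the right-hand side of \eqref{E-3.50c} and that $g(B(X,Y),\xi)$ matches the $\xi$-component. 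One cosmetic slip: on $D_0$ the candidate $B(X,Y)$ equals $g(X,Y)\,\xi$, which is not zero; what vanishes is $g(B(X,Y),Z)$ for $Z\in D_0$, which is all your argument actually needs. Your derivations of \eqref{EC-15} (via \eqref{E-3.24} and the identity $Q+\varphi^2=\eta\otimes\xi$) and of \eqref{EC-16} (via the Leibniz rule) coincide with the paper's.
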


\begin{proof}
 From \eqref{E-3.50c} we have
\begin{align*}
 g((\nabla_{X}\,\varphi)Y, h Z) = \eta(X)\,g(\varphi  h Y +\widetilde Q Y, hZ) -\eta(Y)\,g(\varphi h X + X +\widetilde Q X, hZ),
\end{align*}
which is compatible with \eqref{EC-14}. On the other hand,
\begin{align*}
 g((\nabla_{X}\,\varphi)Y, \xi) = - g( Y, (\nabla_{X}\,\varphi)\xi) = g( Y, \varphi(h-\varphi)X)
 = g(QX +\varphi hX, Y) -\eta(X)\,\eta(Y).
\end{align*}
By the above, to prove \eqref{EC-14}, we need only to show (since $\widetilde Q |_{D_0}=0$)
\begin{align}
\label{EC-14b}
 g((\nabla_X\,\varphi)Y, V) = -\eta(Y)\,g(X,V),\quad
 X,Y\in\mathfrak{X}_M,\ V\in {D}_0.
\end{align}
By Theorem~\ref{T-2.2}, the distribution $\bar{D}=[\xi]\oplus D_1\oplus\ldots\oplus D_r$ is integrable with totally geodesic leaves.
If $X,Y\in\bar D$, then both sides of \eqref{EC-14b} vanish.
If $X\in D_0$ and $Y\in\bar D$, then
\begin{align*}
 g((\nabla_X\,\varphi)Y, V) = - g(Y, (\nabla_X\,\varphi)V) = -\eta(Y)\,g(X,V),
\end{align*}
since $[\xi]\oplus D_0$ is integrable with totally geodesic leaves with the induced Sasakian structure on each leaf,
so $(\nabla_X\,\varphi)V = g(X,V)\,\xi - \eta(V)\,X$.
If $X\in\bar D$ and $Y\in D_0$, then
 $g((\nabla_X\,\varphi)Y, V)=-\eta(X)\,g(Y,V)$,
 and applying \eqref{E-nS-00b}, we get
\begin{align*}
 g((\nabla_X\,\varphi)Y, V) = - g((\nabla_Y\,\varphi)X  + \eta(X)Y , V) = 0.
\end{align*}
If we take $X,Y\in D_0$, then \eqref{EC-14} is true because of \eqref{E-nS-Sas} and the orthogonality of vector fields $X,Y,V$ to $\xi$.
This completes the proof of \eqref{EC-14}.
From \eqref{EC-14}, using \eqref{E-3.24}, we obtain \eqref{EC-15}.
Finally, using \eqref{EC-14} and \eqref{EC-15} in the equality
$(\nabla_X\,\varphi h)Y = (\nabla_X\,\varphi)\,hY + \varphi(\nabla_X\,h)Y$ gives
\begin{align*}
& (\nabla_X\,\varphi h)Y = \eta(X)\,(\varphi h^2 Y {-} hY + \widetilde Q \varphi Y + \eta(Y)\,\xi) \\
& - \eta(Y)\,g(\varphi h^2 X - hX + (2\,\varphi - h)\widetilde Q X + \eta(X)\,\xi) + g(\varphi h^2 X {-} hX + \widetilde Q hX, Y)\,\xi ,
\end{align*}
where \eqref{E-nS-2.2} and \eqref{E-nS-01b}  were also used, hence \eqref{EC-16} is true.
\end{proof}

Recall, see \cite[pp.~15--16]{CLN-2006}, that for any 2-form $\beta$ and 1-form $\eta$ we have
\begin{align*}
 & 3\,(\eta\wedge\beta)(X,Y,Z) =
 \eta(X)\,\beta(Y,Z) + \eta(Y)\,\beta(Z,X) + \eta(Z)\,\beta(X,Y) , \\
 & 3\,d\beta(X,Y,Z) = X\beta(Y,Z) {+} Y\beta(Z,X) {+} Z\beta(X,Y) {-} \beta([X,Y],Z) {-} \beta([Z,X],Y) {-} \beta([Y,Z],X) \\
 & \qquad = (\nabla_X\,\beta)(Y,Z) + (\nabla_Y\,\beta)(Z,X) + (\nabla_Z\,h)(X,Y).
\end{align*}
The following proposition will be used in the proof of our main result, Theorem~\ref{Th-4.5}.

\begin{Proposition}[see Proposition~3.2 in \cite{NDY-2018}]\label{Prop-3.2}
Let $\eta$ be a contact 1-form on a smooth manifold $M$ of dimension $2n+1> 5$. Then, the following operator is injective:
\[
 \Upsilon_{d\eta} : \beta\in\Lambda^2(M) \to d\eta\wedge\beta\in\Lambda^4(M),
\]
where $\Lambda^{p}(M)$ is the vector bundle of differential $p$-forms on $M$.
\end{Proposition}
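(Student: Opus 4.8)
The statement is \emph{pointwise} and purely linear-algebraic: $\Upsilon_{d\eta}$ is injective if and only if, for every $p\in M$, the linear map $\beta\mapsto(d\eta)_p\wedge\beta$ on $\Lambda^2 T_p^*M$ has trivial kernel. So the plan is to fix $p$, write $\omega:=(d\eta)_p$, and exploit the contact condition $\eta\wedge(d\eta)^n\ne0$, which says precisely that $\omega$ restricts to a nondegenerate (symplectic) form on the $2n$-dimensional hyperplane $H:=\ker\eta_p$. Choosing the Reeb direction $\xi$ with $\eta(\xi)=1$ and $\iota_\xi\omega=0$ yields the splitting $T_pM=\langle\xi\rangle\oplus H$ and, dually, a decomposition $\Lambda^\bullet T_p^*M=\Lambda^\bullet H^*\oplus(\eta\wedge\Lambda^\bullet H^*)$.

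Next I would decompose an arbitrary 2-form. Setting $\alpha:=\iota_\xi\beta$ (which lies in $H^*$ since $\alpha(\xi)=\beta(\xi,\xi)=0$) and $\gamma:=\beta-\eta\wedge\alpha$, a direct check gives $\iota_\xi\gamma=0$, so $\gamma\in\Lambda^2 H^*$ and $\beta=\eta\wedge\alpha+\gamma$ with $\alpha\in H^*$ and $\gamma\in\Lambda^2 H^*$. Since $\omega\in\Lambda^2 H^*$ has even degree it commutes with $\eta$, so
\[
 \omega\wedge\beta=\eta\wedge(\omega\wedge\alpha)+\omega\wedge\gamma ,
\]
where $\omega\wedge\alpha\in\Lambda^3 H^*$ and $\omega\wedge\gamma\in\Lambda^4 H^*$. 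These two summands live in the complementary subspaces $\eta\wedge\Lambda^3 H^*$ and $\Lambda^4 H^*$ of $\Lambda^4 T_p^*M$, and because $\eta\wedge(\cdot)$ is injective on $\Lambda^3 H^*$, the equation $\omega\wedge\beta=0$ is equivalent to the pair of equations $\omega\wedge\alpha=0$ and $\omega\wedge\gamma=0$.

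It then remains to show that wedging with the symplectic form $\omega$ annihilates neither $\alpha$ nor $\gamma$ unless they already vanish, and this is where I would invoke the hard Lefschetz property of a symplectic vector space $(H,\omega)$ of dimension $2n$: the Lefschetz operator $L=\omega\wedge(\cdot)$ satisfies that $L:\Lambda^k H^*\to\Lambda^{k+2} H^*$ is injective for every $k\le n-1$ (equivalently, $L^{\,n-k}:\Lambda^k H^*\to\Lambda^{2n-k} H^*$ is an isomorphism). Applying this with $k=1$ forces $\alpha=0$ as soon as $n\ge2$, and applying it with $k=2$ forces $\gamma=0$ as soon as $n\ge3$. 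Under the hypothesis $2n+1>5$, i.e. $n\ge3$, both conclusions hold, whence $\beta=\eta\wedge\alpha+\gamma=0$, proving injectivity of $\Upsilon_{d\eta}$.

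The main obstacle is the injectivity of $L$ on two-forms, $L:\Lambda^2 H^*\to\Lambda^4 H^*$: this is exactly the step that fails for $n=2$ (there $\dim\Lambda^2 H^*=6>1=\dim\Lambda^4 H^*$), and it is precisely why the dimension restriction $2n+1>5$ cannot be dropped. In a fully written proof I would either cite the standard symplectic Lefschetz decomposition or reproduce the short $\mathfrak{sl}(2)$-representation argument (built from $L$, its adjoint contraction operator, and the degree-counting operator) that yields the stated injectivity range; everything else reduces to the routine bookkeeping of the Reeb splitting and the wedge computation above.
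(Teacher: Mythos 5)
Your argument is correct: the reduction to a pointwise linear-algebra statement, the splitting $\beta=\eta\wedge\alpha+\gamma$ along the Reeb direction, and the appeal to the Lefschetz injectivity $L:\Lambda^k H^*\to\Lambda^{k+2}H^*$ for $k\le n-1$ (applied at $k=1$ and $k=2$, which is exactly where the hypothesis $n\ge3$ enters) together give a complete proof, and your remark that the $k=2$ step fails for $n=2$ correctly locates why the dimension bound is sharp. Note that the paper itself offers no proof of this Proposition --- it is imported verbatim from Proposition~3.2 of \cite{NDY-2018} --- and your route is essentially the standard one used there, so there is nothing to reconcile beyond supplying (or citing) the symplectic hard Lefschetz lemma you already flag as the one external ingredient.
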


We will generalize Theorem~3.3 in \cite{NDY-2018} on characterization of Sasakian manifolds.

\begin{Theorem}\label{Th-4.5}
Let $M(\varphi, Q, \xi, \eta, g)$ be a weak nearly Sasakian manifold of dimension greater than $5$
with conditions \eqref{E-nS-10} and \eqref{E-nS-04c}. Then $Q={\rm id}_{\,TM}$ and the structure $(\varphi, \xi, \eta, g)$ is Sasakian.
\end{Theorem}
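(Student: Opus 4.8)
The plan is to reduce the theorem to the vanishing of a single skew-symmetric tensor. Set $S:=\varphi h+\widetilde Q$. By \eqref{E-nS-01b} the symmetric part of $\varphi h$ is $\tfrac12(\varphi h+h\varphi)=-\widetilde Q$, so $S$ is skew-symmetric and $\sigma(X,Y):=g(SX,Y)$ is a $2$-form with $\iota_\xi\sigma=0$ (indeed $h\xi=0$ and $\widetilde Q\xi=0$ give $S\xi=0$). The reason for this choice is that $S=0$ already forces the Sasakian property: if $\varphi h=-\widetilde Q$, then $\varphi hX+QX=X$ and $\varphi hY+\widetilde QY=0$, so \eqref{EC-14} collapses to $(\nabla_X\varphi)Y=g(X,Y)\,\xi-\eta(Y)X$, which is exactly \eqref{E-nS-Sas}; together with the assumed \eqref{E-nS-10}, Theorem~\ref{T-0.1} then yields $Q={\rm id}_{\,TM}$ and that $(\varphi,\xi,\eta,g)$ is Sasakian. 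Thus it suffices to prove $\sigma=0$.

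To access $\sigma$, I would introduce the fundamental $2$-form $\Phi(X,Y):=g(\varphi X,Y)$ and differentiate it using \eqref{EC-14}. Writing $3\,d\Phi(X,Y,Z)$ as the cyclic sum of $g((\nabla_X\varphi)Y,Z)$ and substituting \eqref{EC-14}, the metric terms cancel by symmetry of $g$ while the $\eta$-terms assemble into $d\Phi=3\,\eta\wedge\sigma$. This is the step that consumes the hypotheses \eqref{E-nS-10} and \eqref{E-nS-04c}: they are exactly what make \eqref{EC-14} hold, via the curvature Lemmas, so the curvature input enters the whole argument through this one formula.

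From $d\Phi=3\,\eta\wedge\sigma$ the relation $d\eta\wedge\sigma=0$ follows formally. Applying $d$ gives $0=d^2\Phi=3\,(d\eta\wedge\sigma-\eta\wedge d\sigma)$, hence $d\eta\wedge\sigma=\eta\wedge d\sigma$. Because $\iota_\xi d\eta=0$ (as $(h-\varphi)\xi=0$) and $\iota_\xi\sigma=0$, the left-hand side is annihilated by $\iota_\xi$; contracting the identity with $\xi$ therefore gives $d\sigma=\eta\wedge\iota_\xi d\sigma$, whence $\eta\wedge d\sigma=\eta\wedge\eta\wedge\iota_\xi d\sigma=0$. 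Thus $d\eta\wedge\sigma=0$, i.e.\ $\sigma\in\ker\Upsilon_{d\eta}$.

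Since $\dim M>5$ and $\eta$ is contact, Proposition~\ref{Prop-3.2} shows $\Upsilon_{d\eta}$ is injective, so $\sigma=0$, and the first paragraph finishes the proof. I expect two points to require care. First, Proposition~\ref{Prop-3.2} needs $\eta$ to be a genuine contact form on $M$; here I would invoke the spectral decomposition of Theorem~\ref{T-2.2} to check that $h-\varphi$ is nonsingular on $\ker\eta$, so that $d\eta$ is nondegenerate. Second, unlike the classical case $\widetilde Q=0$ (where $\varphi h$ is itself skew-symmetric and one works with it directly), here the correct object is the skew part $\varphi h+\widetilde Q$ of $\varphi h$, and the main technical check is that the $\widetilde Q$-corrections in \eqref{EC-14} cancel exactly, so that $d\Phi$ is a multiple of $\eta\wedge\sigma$ with no residual terms.
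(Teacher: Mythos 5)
Your proposal is correct and reaches the conclusion through the same overall architecture as the paper --- reduce the theorem to the vanishing of the skew-symmetric tensor $\varphi h+\widetilde Q$ (your $\sigma$ is exactly the paper's $\Phi_1+\Psi_0$), produce an identity of the form $d(\cdot)=\eta\wedge\sigma$ from the Proposition containing \eqref{EC-14}--\eqref{EC-16}, kill $\sigma$ with $d^2=0$ and Proposition~\ref{Prop-3.2}, then conclude via \eqref{EC-14} --- but your execution differs in two worthwhile ways. First, you differentiate the fundamental form $\Phi=g(\varphi\,\cdot,\cdot)$ using only \eqref{EC-14}, getting $d\Phi=3\,\eta\wedge\sigma$ (I checked: the $g$-terms do cancel in the cyclic sum and the $\widetilde Q$-corrections assemble into $\sigma$ exactly as you claim), whereas the paper differentiates $\Phi_0=g(h\,\cdot,\cdot)$ using \eqref{EC-15} to get $d\Phi_0=\eta\wedge(\Phi_1+\Psi_0)$. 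Second, and more substantially, where the paper disposes of the term $\eta\wedge d\sigma$ in $0=d^2\Phi_0$ by explicitly computing $d\Phi_1$ via \eqref{EC-16} and $d\Psi_0$ via \eqref{E-nS-10} and checking that each is of the form $\eta\wedge(\dots)$, you note that $\iota_\xi d\eta=0$ and $\iota_\xi\sigma=0$ force $\eta\wedge d\sigma=0$ purely formally; this renders \eqref{EC-16} and the $d\Psi_0$ computation unnecessary. Your endgame (once $\sigma=0$ turns \eqref{EC-14} into \eqref{E-nS-Sas}, invoke Theorem~\ref{T-0.1}) is also a little cleaner than the paper's, which re-derives $h=\widetilde Q=0$ from \eqref{E-3.29} before citing Proposition~\ref{Th-4.1}.

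Two caveats, neither fatal. The heavy lifting is unchanged: \eqref{EC-14} already carries the full weight of Lemmas~\ref{L-R01b}--\ref{L-R03} and Theorem~\ref{T-2.2}, so your shortcut saves only the superstructure, not the curvature computations. And the point you rightly flag --- that Proposition~\ref{Prop-3.2} needs $\eta$ to be a genuine contact form, i.e.\ $h-\varphi$ nonsingular on $\ker\eta$ --- is asserted without argument in the paper's proof as well; your proposed verification via the spectral decomposition of $h^2$ is at least as careful as the original and should be carried out (on $D_0$ one has $h-\varphi=-\varphi$, which is nonsingular there since $\varphi^2=-Q$ on $\ker\eta$ with $Q$ positive definite; on the $D_i$ a short argument with \eqref{E-nS-01b} is still needed).
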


\begin{proof} We consider the following 2-forms:
\begin{align*}
 & \Phi_0(X,Y)=g(h X, Y),\quad \Phi_1(X,Y)=g(\varphi h X, Y),\quad \Phi_2(X,Y)=g(\varphi h^2 X, Y), \\
 & \Psi_0(X,Y)=g(\widetilde Q X, Y),\quad \Psi_1(X,Y)=g(\widetilde Q \varphi X, Y),\quad \Psi_2(X,Y)=g(\widetilde Q h X, Y).
\end{align*}
It is easy to calculate, see \cite{RP-2}:
\[
  3\,d\Phi_0(X,Y,Z) = g((\nabla_X\,h)Z,Y) + g((\nabla_Y\,h)X, Z) + g((\nabla_Z\,h)Y, X).
\]
Applying \eqref{EC-15} to the above, gives
\begin{equation}\label{E-c-03a}
 d\Phi_0 = \eta\wedge(\Phi_1 + \Psi_0).
\end{equation}
We also have
\[
  3\,d\Phi_1(X,Y,Z) = g((\nabla_X\,\varphi h)Z,Y) +g((\nabla_Y\,\varphi h)X, Z) +g((\nabla_Z\,\varphi h)Y, X).
\]
Similarly to \eqref{E-c-03a}, applying \eqref{EC-16} to the above, gives
\begin{equation}\label{E-c-03b}
 d\Phi_1 = \eta\wedge (\Phi_2 - \Phi_0 + \Psi_1).
\end{equation}
Also, using \eqref{E-nS-10}, we find that $d \Psi_0=-\eta\wedge(\Psi_2-\Psi_1)$:
\begin{align*}
 & 3\,d \Psi_0 = g((\nabla_X\, \widetilde Q)Z,Y) +g((\nabla_Y\,\widetilde Q) X, Z) +g((\nabla_Z\,\widetilde Q)Y, X) \\
 & = \eta(Z)\,g((\nabla_X\, \widetilde Q)\,\xi,Y) +\eta(X)\,g((\nabla_Y\,\widetilde Q)\,\xi, Z) +\eta(Y)\,g((\nabla_Z\,\widetilde Q)\,\xi, X) \\
 & = -\eta(Z)\,g(\widetilde Q(h{-}\varphi) X, Y) -\eta(X)\,g(\widetilde Q (h{-}\varphi) Y, Z) -\eta(Y)\,g(\widetilde Q (h{-}\varphi) Z, X)
 = -3\,\eta\wedge(\Psi_2-\Psi_1).
\end{align*}
Using this and \eqref{E-c-03b}, from \eqref{E-c-03a} we obtain
\begin{align}\label{E-Nic-27}
\nonumber
 & 0= d^2\Phi_0 = d\eta\wedge(\Phi_1 + \Psi_0) - \eta\wedge(d\Phi_1 + d \Psi_0) \\
 & = d\eta\wedge(\Phi_1 + \Psi_0) - \eta\wedge\eta\wedge(\Phi_2 - \Phi_0 + 2\,\Psi_1 - \Psi_2)
 = d\eta\wedge(\Phi_1 + \Psi_0) .
\end{align}
If $\dim M > 5$, $\eta$ being a contact form,
the fact that $d\eta\wedge(\Phi_1 + \Psi_0) = 0$, see \eqref{E-Nic-27} and Proposition~\ref{Prop-3.2},
imply $\Phi_1 + \Psi_0 = 0$, i.e., $\varphi h = -\widetilde Q$.
By \eqref{E-nS-01b}, we get $h\varphi = \varphi h = -\widetilde Q$, see \eqref{E-sol-1}.
Using this in \eqref{EC-14} gives the equality \eqref{E-nS-Sas} well known for Sasakian structure.
Using the equality \eqref{E-nS-Sas} in \eqref{E-3.29}, we~get
\[
 -\eta(Y)\,g(h X,Z) +\eta(Z)\,g( (\widetilde Q\,\varphi - Q\,h)X, Y) = 0.
\]
Taking $Y=\xi$ in the above equality, we get $g(h X, Z)=0$ for all $X,Z\in\mathfrak{X}_M$.
We come to the conclusion that $h=\widetilde Q=0$, therefore, by Proposition~\ref{Th-4.1}, the structure is Sasakian.
\end{proof}

\section{Proofs of Lemmas~\ref{L-R01b} and \ref{L-R03}}
\label{sec:app}

Here, in the proofs of Lemmas~\ref{L-R01b}--\ref{L-R03} we use the approach \cite{Ol-1979}.

\smallskip\textbf{Proof of Lemma~\ref{L-R01b}}. This consists of two steps.

\textbf{Step 1}. Some formulas appearing in the proof of \eqref{E-3.4} are also used in the proof of \eqref{E-3.6}.
Differentiating \eqref{E-nS-00b}, we find
\begin{align}\label{EF-nS-01}
 (\nabla^2_{X,Y}\,\varphi)Z + (\nabla^2_{X,Z}\,\varphi)Y = 2\,g(Y,Z)\,(h {-}\varphi) X -g( (h {-}\varphi) X, Z)Y -g( (h{-}\varphi) X, Y)Z.
\end{align}
Applying the Ricci identity \eqref{E-nS-05}, from \eqref{EF-nS-01} and the skew-symmetry of $\nabla^2_{X,Y}\,\varphi$ we get
\begin{align}\label{E-3.7}
\nonumber
 & g(R_{X,Y}Z, \varphi V) -g(R_{X,Y}V, \varphi Z)  + g((\nabla^2_{X,Z}\,\varphi) Y, V) - g((\nabla^2_{Y,Z}\,\varphi)X, V) \\
\nonumber
 & = 2\,g(Y,Z)g( (h{-}\varphi)X, V)-2\,g(X,Z)g( (h{-}\varphi)Y, V) \\
 & - g(Y,V)g( (h{-}\varphi)X, Z) +g(X,V)g( (h{-}\varphi)Y, Z) + 2\,g(Z,V)g( (h{-}\varphi)Y, X).
\end{align}
By Bianchi and Ricci identities, we find
\begin{align}\label{E-3.8}
\nonumber
 & g(R_{X,Y}Z, \varphi V) = -g(R_{Y,Z}X, \varphi V) - g(R_{Z,X}Y, \varphi V) \\
 & = g((\nabla^2_{Y,Z}\,\varphi) V, X) - g((\nabla^2_{Z,Y}\,\varphi)V, X)
   -g(R_{Y,Z}V, \varphi X) - g(R_{Z,X}Y, \varphi V).
\end{align}
Substituting \eqref{E-3.8} into \eqref{E-3.7}, it follows that
\begin{align}\label{E-3.9}
\nonumber
 & g(R_{X,Z}Y, \varphi V) -g(R_{X,Y}V, \varphi Z) - g(R_{Y,Z}V, \varphi X) \\
\nonumber
 & -g((\nabla^2_{Z,Y}\,\varphi)V, X) - g((\nabla^2_{X,Z}\,\varphi)V, Y) = 2\,g((\nabla^2_{Y,Z}\,\varphi)X, V) \\
\nonumber
 & + 2\,g(Y,Z)g( (X-\varphi)X, V) - 2\,g(X,Z)g( (h-\varphi)Y, V) \\
 & + 2\,g(Z,V)g( (h{-}\varphi)Y, X) - g(Y,V)g( (h{-}\varphi)X, Z) + g(X,V)g( (h{-}\varphi)Y, Z).
\end{align}
On the other hand, using \eqref{EF-nS-01} and the Ricci identity \eqref{E-nS-05}, we see that
\begin{align}\label{E-3.10}
 g(R_{X,Z}Y, \varphi V) -g(R_{X,Z}V, \varphi Y)  - g((\nabla^2_{X,Z}\,\varphi)Y, V) +g((\nabla^2_{Z,X}\,\varphi)Y, V) = 0.
\end{align}
Adding \eqref{E-3.10} to \eqref{E-3.9}, we get
\begin{align}\label{E-3.11}
\nonumber
  & 2\,g(R_{X,Z}Y, \varphi V) -g(R_{X,Y}V, \varphi Z)  - g(R_{Y,Z}V, \varphi X) -g(R_{X,Z}V, \varphi Y) \\
\nonumber
 & = 2\,g((\nabla^2_{Y,V}\,\varphi)Z, X) + 2\,g(Y,Z)g( (h-\varphi)X, V)-2\,g(X,Z)g( (h-\varphi)Y, V) \\
 & + 2\,g(Z,V)g( (h-\varphi)Y, X) - g(Y,V)g( (h-\varphi)X, Z) + g(X,V)g( (h-\varphi)Y, Z) .
\end{align}
Swapping $Y$ and $V$ in \eqref{E-3.11}, we find
\begin{align}\label{E-3.12}
\nonumber
 & 2\,g(R_{X,Z}V, \varphi Y) -g(R_{X,V}Y, \varphi Z)  - g(R_{V,Z}Y, \varphi X) -g(R_{X,Z}Y, \varphi V) \\
\nonumber
 & = 2\,g((\nabla^2_{V,Y}\,\varphi)Z, X) + 2\,g(Z,V)g( (h-\varphi)X, Y) - 2\,g(X,Z)g( (h-\varphi)V, Y) \\
 &  + 2\,g(Y,Z)g( (h-\varphi)V, X) - g(Y,V)g( (h-\varphi)X, Z) + g(X,Y)g( (h-\varphi)V, Z) .
\end{align}
Subtracting \eqref{E-3.12} from \eqref{E-3.11}, and using the Bianchi and Ricci identities, we get
\begin{align*}
\nonumber
 & g(R_{\varphi X,Z}Y, V) + g(R_{X,\varphi Z}Y, V)  + g(R_{X,Z}\varphi Y, V) + g(R_{X,Z}Y, \varphi V) \\
\nonumber
 & = g(Z,V)g( (h{-}\varphi)X, Y) - g(X,Z)g( (h{-}\varphi)V, Y) + g(Y,Z)g( (h{-}\varphi)V, X) \\
 &-(1/2)\,g(Y,V)g( (h-\varphi)X, Z) +(1/2)\,g(X,Y)g( (h-\varphi)V, Z) ,
\end{align*}
which (by replacing $Z$ and $Y$) gives \eqref{E-3.4}.

\textbf{Step 2}. Replacing $X$ by $\varphi X$ in \eqref{E-3.4} and using \eqref{E-nS-2.2}, we have
\begin{align}\label{E-3.13}
\nonumber
 & -g(R_{\,Q X,Y}Z, V) {+}\eta(X) g(R_{\,\xi,Y}Z, V) {+}g(R_{\,\varphi X, \varphi Y}Z, V)
 {+}g(R_{\,\varphi X, Y}\,\varphi Z, V) {+}g(R_{\,\varphi X, Y}Z, \varphi V) \\
\nonumber
 & = g(Y,V)\,g((h-\varphi)\varphi X,Z)-g(\varphi X,Y)\,g(Z,(h-\varphi)V) + g(Y,Z)\,g(\varphi X, (h-\varphi)V) \\
 &-(1/2)\,g(Z,V)\,g((h-\varphi)\varphi X,Y) +(1/2)\,g(\varphi X,Z)\,g(Y, (h-\varphi)V) .
\end{align}
Exchanging $X$ and $Y$ in \eqref{E-3.13}, we find
\begin{align}\label{E-3.14}
\nonumber
 &  g(R_{\,X, Q Y}Z, V) {+}\eta(Y) g(R_{\,\xi,X}Z, V) {-} g(R_{\,\varphi X, \varphi Y}Z, V)
 {+} g(R_{\,\varphi Y, X}\,\varphi Z, V) {+} g(R_{\,\varphi Y, X}Z, \varphi V) \\
\nonumber
 & = g(X,V)\,g((h-\varphi)\varphi Y,Z)-g(\varphi X,Y)\,g(Z, (h-\varphi)V) - g(X,Z)\,g(\varphi(Y, h-\varphi)V) \\
 &-(1/2)\,g(Z,V)\,g(X, (h-\varphi)\varphi Y) +(1/2)\,g(\varphi Y,Z)\,g(X, (h-\varphi)V) .
\end{align}
Subtracting \eqref{E-3.14} from \eqref{E-3.13}, we obtain
\begin{align}\label{E-3.15}
\nonumber
 & 2\,g(R_{\,\varphi X,\varphi Y}Z, V) -2\,g(R_{\,X,Y}Z, V) +\eta(X)\,g(R_{\,\xi,Y}Z, V) -\eta(Y)\,g(R_{\,\xi,X}Z, V) \\
\nonumber
 &\ + g(R_{\,\varphi X, Y}\,\varphi Z, V) - g(R_{\,\varphi Y, X}\,\varphi Z, V) + g(R_{\,\varphi X, Y}Z, \varphi V) - g(R_{\,\varphi Y, X}Z, \varphi V) \\
\nonumber
 &\ - g(R_{\widetilde Q X, Y}Z, V) - g(R_{\,X, \widetilde Q Y}Z, V) \\
 \nonumber
 & = g(Y,V)\,g((h-\varphi)\varphi X,Z) + g(Y,Z)\,g((h-\varphi)V,\varphi X) + g(Z,V)\,g(\widetilde Q X,Y) \\
 \nonumber
 &\ +(1/2)\,g(\varphi X,Z)\,g(Y, (h-\varphi)V) -(1/2)\,g(\varphi Y,Z)\,g(X, (h-\varphi)V) \\
 &\ -g(X,V)\,g((h-\varphi)\varphi Y,Z) + g(X,Z)\,g(Y, \varphi(h-\varphi)V) .
\end{align}
Then, replacing $Z$ by $\varphi Z$ and also $V$ by $\varphi V$ in \eqref{E-3.4} and using \eqref{E-nS-2.2}, we get two equations
\begin{align}\label{E-3.16}
\nonumber
 & -g(R_{\,X, Y}Q Z, V) = {-}\eta(Z)\,g(R_{\,\xi, V}\,X,Y) {-} g(R_{\,X,Y}\,\varphi Z, \varphi V)
 {-} g(R_{\,X,\varphi Y}\,\varphi Z, V) {-} g(R_{\,\varphi X,Y}\,\varphi Z, V)\\
\nonumber
 &\ - g(Y,V)\,g(\varphi(h-\varphi)X, Z)-g(X,Y)\,g(\varphi Z, (h-\varphi)V) + g(Y,\varphi Z)\,g(X, (h-\varphi)V) \\
 &\ -(1/2)\,g(\varphi Z,V)\,g((h-\varphi)X,Y) +(1/2)\,g(X,\varphi Z)\,g(Y, (h-\varphi)V),\\
\label{E-3.17}
\nonumber
 & -g(R_{\,X, Y}Z, Q V) = \eta(V)\,g(R_{\,\xi,Z}\,X,Y) {-} g(R_{\,X,Y}\,\varphi Z, \varphi V)
 {-} g(R_{\,\varphi X, Y}Z, \varphi V) {-} g(R_{\,X,\varphi Y}Z, \varphi V) \\
 \nonumber
 &\ + g(Y,\varphi V)\,g((h-\varphi)X,Z)-g(X,Y)\,g(Z, (h-\varphi)\varphi V) + g(Y,Z)\,g(X, (h-\varphi)\varphi V) \\
 &\ -(1/2)\,g(Z,\varphi V)\,g((h-\varphi)X,Y) +(1/2)\,g(X,Z)\,g(Y, (h-\varphi)\varphi V) .
\end{align}
Adding \eqref{E-3.16} to \eqref{E-3.17}, we get
\begin{align*}
 &-2\,g(R_{\,X, Y}Z, V) = -2\,g(R_{\,X,Y}\,\varphi Z, \varphi V) + g(R_{\,X, Y}\widetilde Q Z, V) + g(R_{\,X, Y}Z, \widetilde Q V) \\
 &\ -\,\eta(Z)\,g(R_{\,\xi, V}\,X,Y)
   - g(R_{\,X,\varphi Y}\,\varphi Z, V) - g(R_{\,\varphi X,Y}\,\varphi Z, V)\\
 &\ +\,\eta(V)\,g(R_{\,\xi, Z}\,X,Y) - g(R_{\,\varphi X, Y}Z, \varphi V) - g(R_{\,X,\varphi Y}Z, \varphi V) \\
\nonumber
 &\ - g(Y,V)\,g(\varphi(h-\varphi)X, Z) + g(Y,\varphi Z)\,g(X, (h-\varphi)V) \\
\nonumber
 &\ + g(Y,\varphi V)\,g((h-\varphi)X,Z) +2\,g(X,Y)\,g(Z, \widetilde Q V) + g(Y,Z)\,g(X, (h-\varphi)\varphi V) \\
 &\ +(1/2)\,g(X,\varphi Z)\,g(Y, (h-\varphi)V) +(1/2)\,g(X,Z)\,g(Y, (h-\varphi)\varphi V) .
\end{align*}
Substituting the above equation into \eqref{E-3.15}, we have
\begin{align}\label{E-3.18}
\nonumber
 & 2\,g(R_{\,\varphi X,\varphi Y}Z, V) -2\,g(R_{\,X,Y}\,\varphi Z, \varphi V) - \eta(Z)\,g(R_{\,\xi, V}X,Y) + \eta(V)\,g(R_{\,\xi,Z}\,X,Y) \\
\nonumber
 & +\,\eta(X)\,g(R_{\,\xi,Y}Z, V) -\eta(Y)\,g(R_{\,\xi,X}Z, V) +\delta(X,Y,Z,V) \\
\nonumber
 & - g(Y,V)\,g(\varphi(h-\varphi)X, Z) + g(Y,\varphi Z)\,g(X, (h-\varphi)V) \\
\nonumber
 & + g(Y,\varphi V)\,g((h-\varphi)X,Z) + g(Y,Z)\,g(X, (h-\varphi)\varphi V) + 2\,g(X,Y)\,g(Z, \widetilde Q V) \\
 &  +(1/2)\,g(X,\varphi Z)\,g(Y, (h-\varphi)V) +(1/2)\,g(X,Z)\,g(Y, (h-\varphi)\varphi V) = 0.
\end{align}
Using \eqref{E-3.23b} that is supporting by \eqref{E-nS-04c}, \eqref{E-3.18} and symmetry of $h^2$ and $Q$, we obtain \eqref{E-3.6}:
\begin{align*}
\nonumber
& 2\,g(R_{\,\varphi X,\varphi Y}Z, V) -2\,g(R_{\,X,Y}\,\varphi Z, \varphi V) + \delta(X,Y,Z,V)  + 2\,g(X,Y)\,g(Z, \widetilde Q V) \\
\nonumber
 & - g(Y,V)\,g(\varphi(h-\varphi)X, Z) + g(Y,\varphi Z)\,g(X, (h-\varphi)V) \\
\nonumber
 & + g(Y,\varphi V)\,g((h-\varphi)X,Z) + g(Y,Z)\,g(X, (h-\varphi)\varphi V) \\
 &  +(1/2)\,g(X,\varphi Z)\,g(Y, (h-\varphi)V) +(1/2)\,g(X,Z)\,g(Y, (h-\varphi)\varphi V) = 0.
\end{align*}
Replacing $X$ by $\varphi X$ and $Y$ by $\varphi Y$ in \eqref{E-3.6} and using \eqref{E-nS-2.2}, we get \eqref{E-3.5}.
\hfill$\square$

\smallskip\textbf{Proof of Lemma~\ref{L-R03}}.
In Step 1, our formulas depend on four vectors $X,Y,Z,V\in\mathfrak{X}_M$ and contain many terms.
In~Step 2, the~formulas depend only on three vectors from $TM$ and contain a relatively small number of terms.

\textbf{Step 1}. Differentiating \eqref{E-3.29} and using
$g((\nabla_X\varphi)(\nabla_V\varphi)Y, Z)=-g((\nabla_V\varphi)Y, (\nabla_X\varphi)Z)$ gives
\begin{align}\label{E-3.32}
\nonumber
 & g((\nabla_V\,\varphi)Y, (\nabla_X\,\varphi)Z) + g((\nabla_X\,\varphi)Y, (\nabla_V\,\varphi)Z) \\
\nonumber
 & = g((\nabla^2_{V,X}\,\varphi)\,\varphi Y, Z) + g((\nabla^2_{V,X}\,\varphi)\,\varphi Z, Y) - g(Y, (h-\varphi)V)\,g((h-\varphi)X, Z) \\
 &\ -\,\eta(Y)\,g((\nabla_V (h-\varphi))X,Z) -\nabla_V\big(\eta(Z)\,g((h-\varphi)X,\, Q Y) \big).
\end{align}
On the other hand, using \eqref{EF-nS-01}, \eqref{E-3.11} and \eqref{E-nS-2.2}, we find $\nabla^2$-terms in \eqref{E-3.32}:
\begin{align}
 \label{ER-nS-04a}
\nonumber
 & g((\nabla^2_{V,X}\,\varphi)\varphi Y, Z) = g((\nabla^2_{V, \varphi Y}\,\varphi)Z, X) \\
\nonumber
 &\ +2\,g(X,\varphi Y)\,g(Z, (h-\varphi)V) +g(X,Z)\,g(Y, \varphi(h-\varphi)V) -g(\varphi Y,Z)\,g(X, (h-\varphi)V) \\
\nonumber
 & = -g(R_{\,X,Z}V, Q Y) -\eta(Y)\,g(R_{\,\xi, V}\,X,Z) +(3/2)\,g(X,\varphi Y)\,g(Z, (h-\varphi)V) \\
\nonumber
 &\ + g(Z,V)\,g(\varphi(h-\varphi)X, Y) + (1/2)\,g(\varphi Y, V)\,g((h-\varphi)X,Z) \\
 &\ -(1/2)\,g(R_{\,X,V}\,\varphi Y, \varphi Z) - (1/2)\,g(R_{\,V,Z}\,\varphi Y, \varphi X) - (1/2)\,g(R_{\,X,Z}\,\varphi Y, \varphi V) ,\\
\label{ER-nS-04aa}
\nonumber
 & g((\nabla^2_{V,X}\,\varphi)\varphi Z, Y) = g((\nabla^2_{V, \varphi Z}\,\varphi)Y, X) \\
\nonumber
 &\ +2\,g(X,\varphi Z)\,g(Y, (h-\varphi)V) +g(X,Y)\,g(Z, \varphi(h-\varphi)V) -g(Y, \varphi Z)\,g(X, (h-\varphi)V) \\
\nonumber
 & = -g(R_{\,X,Y}V, Q Z) -\eta(Z)\,g(R_{\,\xi,V}\,X,Y) +(3/2)\,g(X,\varphi Z)\,g(Y,(h-\varphi)V) \\
\nonumber
 &\ + g(Y,V)\,g(\varphi(h-\varphi)X, Z) +(1/2)\,g(\varphi Z,V)\,g((h-\varphi)X,Y) \\
 &\ -(1/2)\,g(R_{\,X,V}\,\varphi Z, \varphi Y) - (1/2)\,g(R_{\,V,Y}\,\varphi Z, \varphi X) - (1/2)\,g(R_{\,X,Y}\,\varphi Z, \varphi V) .
\end{align}
Using \eqref{E-3.23b} and \eqref{ER-nS-04a}--\eqref{ER-nS-04aa}, we get from \eqref{E-3.32} the equality
\begin{align}\label{ER-nS-03b}
\nonumber
 & g((\nabla_V\,\varphi)Y, (\nabla_X\,\varphi)Z) + g((\nabla_X\,\varphi)Y, (\nabla_V\,\varphi)Z) = -g(R_{\,X,Z}V, Q Y) -\eta(Y)\,g(R_{\,\xi,V}\,X,Z) \\
\nonumber
 &\ + (1/2)g(R_{\,Y,V}\,\varphi Z, \varphi X)  {+} (1/2)g(R_{\,Y,X}\,\varphi Z, \varphi V)
 {-} (1/2)g(R_{\,V,Z}\,\varphi Y, \varphi X) {-} (1/2)g(R_{\,X,Z}\,\varphi Y, \varphi V)  \\
\nonumber
 & -g(R_{\,X,Y}V, Q Z) -\eta(Z)\,g(R_{\,\xi,V}\,X,Y) - g((h-\varphi)V,Y)\,g((h-\varphi)X, Z) \\
\nonumber
 &\ +(3/2)\,g(X,\varphi Y)\,g(Z,(h-\varphi)V) {+} g(Z,V)\,g(\varphi(h-\varphi)X, Y) {+}(1/2)\,g(\varphi Y,V)\,g((h-\varphi)X,Z) \\
\nonumber
 &\ +(3/2)\,g(X,\varphi Z)\,g(Y,(h-\varphi)V) {+} g(Y,V)\,g(\varphi(h-\varphi)X, Z) {+}(1/2)\,g(\varphi Z,V)\,g((h-\varphi)X,Y) \\
 & -\,\eta(Y)\,g((\nabla_V (h-\varphi))X,Z) -\nabla_V\big(\eta(Z)\,g((h-\varphi)X,\, Q Y) \big).
\end{align}
Using \eqref{E-3.6} for $g(R_{\,Y,V}\,\varphi Z, \varphi X)= g(R_{\,\varphi X, \varphi Z}V, Y)$
and $g(R_{\,Y,X}\,\varphi Z, \varphi V)= g(R_{\,\varphi V, \varphi Z}X, Y)$ in \eqref{ER-nS-03b},
and replacing $(\nabla_V (h-\varphi))X$ by \eqref{E-3.24} and using $g(R_{\,\varphi X,\,\varphi Y}\,Z,\,\xi) = 0$, see \eqref{E-nS-04cc}, we get
\begin{align}\label{E-3.34}
\nonumber
 & g((\nabla_V\,\varphi)Y, (\nabla_X\,\varphi)Z) + g((\nabla_X\,\varphi)Y, (\nabla_V\,\varphi)Z) = -g(R_{\,X,Z}V, Q Y)  -g(R_{\,X,Y}V, Q Z) \\
\nonumber
 &\ + g(R_{\,V,Z}\,\varphi X, \varphi Y) + g(R_{\,X,Z}\,\varphi V, \varphi Y) - g(Y, (h-\varphi)V)\,g((h-\varphi)X, Z) \\
\nonumber
 &\ -\eta(X)\eta(Z)\,\,g(Y, (h-\varphi)^2 V) + \eta(Y)\,\eta(Z)\,g(X, (h-\varphi)^2 V) \\
\nonumber
 &\ -(1/2)\,g(Y,Z)\,g( \widetilde Q X + \varphi^2 X, V) + g(Y,V)\,g(\varphi(h-\varphi)X, Z) \\
\nonumber
 &\ + (3/4)\,g(Z,V)\,g(\varphi(h-\varphi)X,Y) - (1/4)\,g(X,Z)\,g((h-\varphi)\varphi Y,V) \\
\nonumber
 &\ -(1/4)\,g(X,V)\,g((h-\varphi)\varphi Y,Z) - g(Z,V)\,g(\widetilde Q Y, X) - (1/2)g(X,Z)\,g(\widetilde Q Y, V) \\
\nonumber
 &\ -(5/4)\,g(X,\varphi Z)\,g((h-\varphi)Y,V) -(1/4)\,g(Z,\varphi V)\,g((h-\varphi)X,Y) \\
\nonumber
 &\ +(3/2)\,g(X,\varphi Y)\,g((h-\varphi)V, Z) {-}(1/2)\,g(Y,\varphi V)\,g((h-\varphi)X,Z) \\
 &\ -(1/4)\,\delta(V,Z,X,Y) -(1/4)\,\delta(X,Z,V,Y) -\nabla_V\big(\eta(Z)\,g((h-\varphi)X,\, Q Y) \big).
\end{align}
Replacing $Z$ and $V$ by $\varphi Z$ and $\varphi V$ in \eqref{E-3.34}, we find
\begin{align}\label{E-3.35}
\nonumber
 & g((\nabla_{\varphi V}\,\varphi)Y, (\nabla_X\,\varphi)\varphi Z) {+} g((\nabla_X\,\varphi)Y, (\nabla_{\varphi V}\,\varphi)\varphi Z) =
 g(R_{\,X,\varphi Z}\,\varphi^2 V, \varphi Y) {-} g(R_{\,X,\varphi Z}\,\varphi V, Q Y) \\
\nonumber
 &\ - g(R_{\,X,Y}\,Q \varphi Z, \varphi V) + g(R_{\,\varphi X, \varphi Y}\,\varphi Z,\varphi V) - g(Y, (h-\varphi)\varphi V)\,g((h-\varphi)X, \varphi Z) \\
\nonumber
 &\ -(1/2)\,g(Y,\varphi Z)\,g(\widetilde Q X + \varphi^2 X, \varphi V) + g(Y,\varphi V)\,g(\varphi(h-\varphi)X, \varphi Z) \\
\nonumber
 &\ - (3/4)\,g(\varphi^2 Z, V)\,g(\varphi(h-\varphi)X, Y) - (1/4)\,g(X,\varphi Z)\,g((h-\varphi)\varphi Y,\varphi V) \\
\nonumber
 &\ -(1/4)\,g(X,\varphi V)\,g((h{-}\varphi)\varphi Y,\varphi Z) {+} g(\varphi^2 Z, V)\,g(X, \widetilde Q Y)
 {-} (1/2)g(X,\varphi Z)\,g(\widetilde Q Y, \varphi V) \\
\nonumber
 &\ -(5/4)\,g(X,\varphi^2 Z)\,g((h-\varphi)Y, \varphi V) +(1/4)\,g(\varphi Z, Q V)\,g((h-\varphi)X,Y) \\
\nonumber
 &\ +(3/2)\,g(X,\varphi Y)\,g((h-\varphi)\varphi V, \varphi Z) {-}(1/2)\,g(Y,\varphi^2 V)\,g((h-\varphi)X,\varphi Z) \\
 &\ -(1/4)\,\delta(\varphi V,\varphi Z,X,Y) -(1/4)\,\delta(X,\varphi Z,\varphi V,Y) .
\end{align}
Using \eqref{E-3.29}, \eqref{E-3.30}, \eqref{E-3.31}, \eqref{E-nS-2.2} and Lemma~\ref{L-nS-02}, we rewrite terms in the lhs of \eqref{E-3.35}:
\begin{align}\label{E-3.36}
\nonumber
& g((\nabla_{\varphi V}\,\varphi)Y, (\nabla_X\,\varphi)\varphi Z) =
 g(Q(\nabla_X\,\varphi)Z, (\nabla_{V}\,\varphi)Y) - \eta(V) g((\nabla_X\,\varphi)Z, \varphi hY) \\
\nonumber
&\ - 2\,\eta(Y) g((\nabla_X\,\varphi)Z, \varphi^2 V) + \eta(Z) g((\nabla_V\,\varphi)Y, \varphi(h-\varphi)X)  \\
\nonumber
&\ -\eta(Z)\eta(V)\,g((h-\varphi)X, (h-\varphi)Y) -2\,\eta(Z)\eta(Y)\,g((h-\varphi)X, \varphi V)\\
&\ -\,g(\varphi(h-\varphi)X, Z) g(Y, \varphi(h-\varphi)V) + g(Y, (h-\varphi)\varphi V)\,g((h-\varphi)X, Q Z),\\
\label{E-3.37}
\nonumber
& g((\nabla_X\,\varphi)Y, (\nabla_{\varphi V}\,\varphi)\varphi Z) =
 \eta(Z)\,g((\nabla_X\,\varphi)Y, (h{-}\varphi)\varphi V) {-} g((\nabla_{V}\,\varphi)Z, Q(\nabla_X\,\varphi)Y) \\
\nonumber
&\ +\,\eta(V)\,g((\nabla_X\,\varphi)Y, \varphi h Z) - g(\varphi(h-\varphi)X, Y)\,g(\varphi(h-\varphi)V, Z) \\
&\ +\,g(\varphi(h-\varphi)X, Y)\,g((h-\varphi)\varphi V, Q Z).
\end{align}
From \eqref{E-3.23b} and Lemma~\ref{L-nS-02}, we have
\begin{align}\label{E-3.38}
\nonumber
 & g(R_{\,X, \varphi Z}\,\varphi V, Y) - g(R_{\,X, \varphi Z}\,\varphi^2 V, \varphi Y) = g(R_{\,X, \varphi Z}\,\varphi V, Y) \\
 & +g(R_{\,X, \varphi Z} Q V, \varphi Y) -\eta(X)\,\eta(V)\,g((h-\varphi)^2\varphi Y, \varphi Z).
\end{align}
On the other hand, from \eqref{E-3.4} and \eqref{E-3.6} it follows that
\begin{align}\label{E-3.39}
\nonumber
 & g(R_{\,X, Z}\,\varphi V, \varphi Y) + g(R_{\,X, Z}\,\varphi^2 V, Y) + g(R_{\,X, \varphi Z}\,\varphi V, Y) + g(R_{\,\varphi X, Z}\,\varphi V, Y) \\
\nonumber
 &\ = -g(Y,Z)\,g(\varphi(h-\varphi)X, V) + g(X,Z)\,g(\varphi(h-\varphi)Y, V) + g(Z,\varphi V)\,g(X, hY{-}\varphi Y) \\
 &\ -(1/2)\,g(Y,\varphi V)\,g((h-\varphi)X,Z) +(1/2)\,g(X,\varphi V)\,g((h-\varphi)Y,Z) ,\\
\label{E-3.40}
\nonumber
 & g(R_{\,\varphi X, Z}\,\varphi V, \varphi^2 Y)  = g(R_{\,\varphi^2 X, \varphi Z}V, \varphi Y) {+} (1/2)\delta(\varphi X,Z,V,\varphi Y)
 {+} g(\varphi X,Z)\,g(\widetilde Q \varphi Y, V)\\
\nonumber
 &\ +(1/2)\,g(\varphi Y,Z)\,g((h-\varphi)\varphi X, \varphi V) + (1/2)\,g(Z,\varphi V)\,g(\varphi X, (h-\varphi)\varphi Y) \\
\nonumber
 &\ + (1/2)\,g(\varphi^2 Y, Z)\,g((h-\varphi)\varphi X, V) + (1/2)\,g(Z,V)\,g((h-\varphi)\varphi X, Q Y) \\
 &\ -(1/4)\,g(\varphi^2 X, V)\,g((h-\varphi)\varphi Y, Z) + (1/4)\,g(\varphi X,V)\,g(Q Y, (h-\varphi)Z) .
\end{align}
From \eqref{E-3.40}, using \eqref{E-nS-2.2},  we find
\begin{align}\label{ER-nS-08bb}
\nonumber
 & -g(R_{\,\varphi X,Z}\,\varphi V, Q Y) +\eta(Y)\,g(R_{\,\varphi X, Z}\,\varphi V,\xi) =-g(R_{\,Q X,\varphi Z}V,\varphi Y) \\
\nonumber
 &\ +\eta(X)\,g(R_{\,\xi,\varphi Z}V,\varphi Y) +(1/2)\,\delta(\varphi X,Z,V,\varphi Y) {+} g(\varphi X,Z)\,g(\widetilde Q \varphi Y, V)\\
\nonumber
 &\ +(1/2)\,g(\varphi Y,Z)\,g((h-\varphi)\varphi X, \varphi V) + (1/2)\,g(Z,\varphi V)\,g(\varphi X, (h-\varphi)\varphi Y) \\
\nonumber
 &\ + (1/2)\,g(\varphi^2 Y, Z)\,g((h-\varphi)\varphi X, V) + (1/2)\,g(Z,V)\,g((h-\varphi)\varphi X, Q Y) \\
 &\ -(1/4)\,g(\varphi^2 X, V)\,g((h-\varphi)\varphi Y, Z) + (1/4)\,g(\varphi X,V)\,g(Q Y, (h-\varphi)Z) .
\end{align}
Summing up the formulas \eqref{E-3.39} and \eqref{ER-nS-08bb}
(and using \eqref{E-3.39}, \eqref{E-3.40}, \eqref{E-3.23b} and Lemma~\ref{L-nS-02}), we get
\begin{align}\label{E-3.41}
\nonumber
 & g(R_{\,\varphi X, Z}\,\varphi V, Y) +g(R_{\,Q X,\varphi Z}V,\varphi Y) = - g(R_{\,X, Z}\,\varphi V, \varphi Y) - g(R_{\,X, Z}\,\varphi^2 V, Y) \\
\nonumber
 &  - g(R_{\,X, \varphi Z}\,\varphi V, Y) + g(R_{\,\varphi X,Z}\,\varphi V, Q Y) -\eta(Y)\,\eta(Z)\,g(\varphi X, (h-\varphi)^2\varphi V) \\
\nonumber
 &\ - g(Y,Z)\,g(\varphi(h-\varphi)X, V) +g(X,Z)\,g(\varphi(h-\varphi)Y, V) + g(Z,\varphi V)\,g(X, (h-\varphi)Y) \\
\nonumber
 &\ -(1/2)\,g(Y,\varphi V)\,g((h-\varphi)X,Z) +(1/2)\,g(X,\varphi V)\,g((h-\varphi)Y,Z) \\
\nonumber
&\ +\eta(X)\,\eta(V)\,g(\varphi Y, (h-\varphi)^2\varphi Z)
{+}(1/2)\,\delta(\varphi X,Z,V,\varphi Y) {+} g(\varphi X,Z)\,g(\widetilde Q \varphi Y, V)\\
\nonumber
 &\ +(1/2)\,g(\varphi Y,Z)\,g((h-\varphi)\varphi X, \varphi V) + (1/2)\,g(Z,\varphi V)\,g(\varphi X, (h-\varphi)\varphi Y) \\
\nonumber
 &\ + (1/2)\,g(\varphi^2 Y, Z)\,g((h-\varphi)\varphi X, V) + (1/2)\,g(Z,V)\,g((h-\varphi)\varphi X, Q Y) \\
 &\ -(1/4)\,g(\varphi^2 X, V)\,g((h-\varphi)\varphi Y, Z) + (1/4)\,g(\varphi X,V)\,g(Q Y, (h-\varphi)Z) .
\end{align}
Substituting \eqref{E-3.41} into \eqref{E-3.38}, we get
\begin{align}\label{E-3.42}
\nonumber
 & g(R_{\,X, \varphi Z}\,\varphi V, Y) - g(R_{\,X, \varphi Z}\,\varphi^2 V, \varphi Y)
  = - g(R_{\,Q X,\varphi Z}V,\varphi Y) - g(R_{\,X, Z}\,\varphi V, \varphi Y)  \\
\nonumber
 &\ - g(R_{\,X, Z}\,\varphi^2 V, Y) - g(R_{\,X, \varphi Z}\,\varphi V, Y) + g(R_{\,\varphi X,Z}\,\varphi V, Q Y) +g(R_{\,X, \varphi Z} Q V, \varphi Y) \\
\nonumber
 &\ + g(Y,Z)\,g((h-\varphi)X,\varphi V) + g(X,Z)\,g(\varphi(h-\varphi)Y, V) + g(Z,\varphi V)\,g(X, (h-\varphi)Y) \\
\nonumber
 &\ -(1/2)\,g(Y,\varphi V)\,g((h-\varphi)X,Z) +(1/2)\,g(X,\varphi V)\,g((h-\varphi)Y,Z) \\
\nonumber
&\ +(1/2)\,\delta(\varphi X,Z,V,\varphi Y) {+} g(\varphi X,Z)\,g(\widetilde Q \varphi Y, V)
 -\eta(Y)\,\eta(Z)\,g((h-\varphi)^2\varphi V, \varphi X) \\
\nonumber
 &\ +(1/2)\,g(\varphi Y,Z)\,g((h-\varphi)\varphi X, \varphi V) + (1/2)\,g(Z,\varphi V)\,g(\varphi X, h\varphi Y {+} Q Y) \\
\nonumber
 &\ + (1/2)\,g(\varphi^2 Y, Z)\,g((h-\varphi)\varphi X, V) + (1/2)\,g(Z,V)\,g((h-\varphi)\varphi X, Q Y) \\
 &\ -(1/4)\,g(\varphi^2 X, V)\,g((h-\varphi)\varphi Y, Z) + (1/4)\,g(\varphi X,V)\,g(Q Y, (h-\varphi)Z) .
\end{align}
By means of \eqref{E-3.23b}, from \eqref{E-3.6} and \eqref{E-3.5}, we have
\begin{align}\label{E-3.43}
\nonumber
 & g(R_{\,\varphi X,\varphi Y}\,\varphi Z, \varphi V) - g(R_{\,X,Y}\,\varphi Z, \varphi V)
 = - g(R_{\,\varphi X,\varphi Y}Z, V) + g(R_{\,Q X, Q Y}Z, V) \\
\nonumber
 & {-}\eta(X)\,g(R_{\,\xi,Q Y}Z, V) + \eta(Y)\,g(R_{\,\xi, Q X}Z, V) \\
\nonumber
 &\ +(1/2)\,g(\varphi Y,V)\,g((h-\varphi)\varphi X,\varphi Z) + (1/2)\,g(\varphi^2 Y, Z)\,g(X, \varphi(h-\varphi)V) \\
\nonumber
 &\ - (1/2) g(\varphi^2 Y,V) g((h-\varphi)\varphi X,Z) + (1/2) g(\varphi Y,Z) g(\varphi X, (h-\varphi)\varphi V) \\
\nonumber
 &\ +(1/4)\,g(\varphi^2 X, Z)\,g(Y, \varphi(h-\varphi)V) +(1/4)\,g(\varphi X,Z)\,g(\varphi Y, (h-\varphi)\varphi V) \\
\nonumber
 &\ +(1/2)\,g(Y,V)\,g(\varphi(h-\varphi)X, Z) - (1/2)\,g(Y,\varphi Z)\,g(X, (h-\varphi)V) \\
\nonumber
 &\ - (1/2)\,g(Y,\varphi V)\,g((h-\varphi)X,Z) - (1/2)\,g(Y,Z)\,g(X, (h-\varphi)\varphi V) \\
\nonumber
 &\ -(1/4)\,g(X,\varphi Z)\,g(Y, (h-\varphi)V) -(1/4)\,g(X,Z)\,g(Y, (h-\varphi)\varphi V) \\
 &\ - g(\varphi^2 X + X, Y)\,g(\widetilde Q V, Z)
  +(1/2)\,\delta(\varphi X,\varphi Y,Z,V) -(1/2)\,\delta(X,Y,Z,V) .
\end{align}

\textbf{Step 2}. Substituting \eqref{E-3.36}, \eqref{E-3.37}, \eqref{E-3.42} and \eqref{E-3.43} into \eqref{E-3.35} and
adding the result to \eqref{E-3.34}, we obtain a multi-term equation (which we do not place here).
Then putting $\xi$ on $V$, we get the following equation with two $\nabla\varphi$-terms and a relatively small number of other terms:
\begin{align*}
\nonumber 
 & g((\nabla_X\,\varphi)Z, \widetilde Q Y + Q(\varphi h +\widetilde Q)Y)
 + g((\nabla_X\,\varphi)Y, (2\,\varphi h +\widetilde Q)Z - Q(\varphi h +\widetilde Q)Z) \\
\nonumber
&\ - \eta(Z)\,g((h-\varphi)X, \varphi(\varphi h +\widetilde Q)Y + (h-\varphi)Y) = - g(R_{\,X,Z}\,\xi, Q Y) \\
\nonumber
&\ - g(R_{\,X,Y}\,\xi, Q Z) + g(R_{\,\xi,Z}\,\varphi X, \varphi Y) - \eta(X)\,g(R_{\,\xi,Q Y}Z, \xi) + \eta(Y)\,g(R_{\,\xi, Q X}Z, \xi)\\
\nonumber 
&\  + g(R_{\,Q X,\varphi Z}\,\xi,\varphi Y) - g(R_{\,X, \varphi Z}\,\xi, \varphi Y) - g(R_{\,\varphi X,\varphi Y}Z, \xi) + g(R_{\,Q X, Q Y}Z, \xi) \\
\nonumber
&\ + \eta(Y)\,g(\varphi(h-\varphi)X, Z) + (3/4)\,\eta(Z)\,g(\varphi(h-\varphi)X,Y) -(1/4)\,\eta(X)\,g((h-\varphi)\varphi Y,Z) \\
\nonumber
&\ - \eta(Z)\,g(\widetilde Q Y, X) - (1/2)\,\eta(Z)\,g((h-\varphi)\varphi X, Q Y) +(1/2)\,\eta(Y)\,g(\varphi(h-\varphi)X, Z) .
\end{align*}
After applying \eqref{E-3.23b} the above equation is reduced to the following form:
\begin{align}\label{E-3.45-xi}
\nonumber
& g((\nabla_X\,\varphi)Z, Q\varphi(h{-}\varphi)Y - Y)
 + g((\nabla_Y\,\varphi)X , \widetilde Q\varphi(h-\varphi)Z -  \widetilde Q Z -\varphi h Z) \\
&\ +\eta(X)\,g(p_1(Z),Y) +\eta(Y)\,g(p_2(Z),X)+\eta(Z)\,g(p_3(X),Y) = 0 ,
\end{align}
where $p_1,p_2,p_3$ are linear operators (polynomials in $h,\varphi,\widetilde Q$) on $TM$ and $p_1,p_2$ vanish on $\xi$.

Swapping $X\leftrightarrow Z$ and $V\leftrightarrow Y$ in the multi-term equation (mentioned above),
then putting $\xi$ on $V$, we get another equation with two $\nabla\varphi$-terms and a relatively small number of other terms:
\begin{align*}
\nonumber 
 & -g((\nabla_Z\,\varphi)X, 2\,\varphi hY+\widetilde Q\varphi(h-\varphi)Y) + g((\nabla_{Y}\,\varphi)X,\, \widetilde Q\varphi(h-\varphi)Z) \\
\nonumber
&\ + \eta(X)\,g( 2\,\varphi^2 Y + 2\,Q Y , \varphi(h-\varphi)Z) - \eta(Y)\,g(\varphi(h-\varphi)Z, \varphi h X) = - g(R_{\,Z,X}Y, \xi)\\
\nonumber
&\ + g(R_{\,\xi, Z}Y, Q X) + g(R_{\,Z, X}\,\varphi^2 Y, \xi) + g(R_{\,Z, \varphi X}\,\varphi Y, \xi) - g(R_{\,\varphi Z,X}\,\varphi Y, \xi) \\
\nonumber
&\ - g(R_{\,\xi,Z}\,\varphi Y, \widetilde Q \varphi X) + \eta(X)\,g(Z, (h-\varphi)^2 Y) - (1/2)\,\eta(X)\,g( \widetilde Q Z + \varphi^2 Z, Y)  \\
\nonumber
&\ - \eta(X)\,g((h-\varphi)Z,\varphi Y) + \eta(X)\,g((h-\varphi)^2\varphi Y, \varphi Z) + \eta(Y)\,g(\varphi(h-\varphi)Z, X) \\
\nonumber
&\ +(1/2)\,\eta(Y)\,g(\varphi(h-\varphi)Z, X) - (1/2)\,\eta(X)\,g(Z, (h-\varphi)\varphi Y) - \eta(Z)\,g(\widetilde Q Y, X) .
\end{align*}
After applying \eqref{E-3.23b} and \eqref{E-nS-00b} the above equation is reduced to the following form:
\begin{align}\label{E-3.45b-xi}
\nonumber
&  g((\nabla_X\,\varphi)Z, 2\,\varphi hY+\widetilde Q\varphi(h-\varphi)Y) + g((\nabla_{Y}\,\varphi)X, \widetilde Q\varphi(h-\varphi)Z) \\
&\ +\eta(X)\,g(q_1(Z),Y) +\eta(Y)\,g(q_2(Z),X)+\eta(Z)\,g(q_3(X),Y) = 0 ,
\end{align}
where $q_1,q_2,q_3$ are linear operators (polynomials in $h,\varphi,\widetilde Q$) on $TM$ and $q_1,q_2$ vanish on $\xi$.

Deducing \eqref{E-3.45-xi} and \eqref{E-3.45b-xi}, we also used Lemma~\ref{L-nS-02}, \eqref{E-nS-04cc},
Remark~\ref{Rem-delta} (that all $\delta$-terms vanish) and $\nabla_\xi\big( \eta(Z)\,g(h X - \varphi X, \widetilde Q Y) \big)=0$.
Subtracting \eqref{E-3.45b-xi} from \eqref{E-3.45-xi}, we get
\begin{align}\label{E-3.46}
\nonumber
& - g((\nabla_Y\,\varphi)X,\ \varphi h Z + \widetilde Q Z) - g((\nabla_X\,\varphi)Z,\ \varphi h Y - \widetilde Q Y) \\
&\ +\eta(X)\,g(r_1(Z),Y) +\eta(Y)\,g(r_2(Z),X)+\eta(Z)\,g(r_3(X),Y) = 0 ,
\end{align}
where $r_1,r_2,r_3$ are linear operators on $TM$ and $r_1,r_2$ vanish on $\xi$.

Swapping $X$ and $Y$ in \eqref{E-3.46} we get
\begin{align}\label{E-3.46XY}
\nonumber
& - g((\nabla_X\,\varphi)Y,\ \varphi h Z + \widetilde Q Z) - g((\nabla_Y\,\varphi)Z,\ \varphi h X - \widetilde Q X) \\
&\ +\eta(Y)\,g(r_1(Z),X) +\eta(X)\,g(r_2(Z),Y)+\eta(Z)\,g(r_3(Y),X) = 0 .
\end{align}
Adding \eqref{E-3.46} to the resulting equation \eqref{E-3.46XY} with the use of \eqref{E-nS-00b} and \eqref{E-3.23b}, we get
\begin{align}\label{E-3.48}
\nonumber
& - g((\nabla_X\,\varphi)Z,\ \varphi h Y - \widetilde Q Y) - g((\nabla_Y\,\varphi)Z,\ \varphi h X - \widetilde Q X) \\
&\ +\eta(X)\,g(a_1(Z),Y) +\eta(Y)\,g(a_2(Z),X)+\eta(Z)\,g(a_3(X),Y) = 0 ,
\end{align}
where $a_1,a_2,a_3$ are linear operators on $TM$ and $a_1,a_2$ vanish on $\xi$.

Swapping $X$ and $Z$ in \eqref{E-3.48} we get
\begin{align}\label{E-3.49a}
\nonumber
& - g((\nabla_Z\,\varphi)X,\ \varphi h Y - \widetilde Q Y) - g((\nabla_Y\,\varphi)X,\ \varphi h Z - \widetilde Q Z) \\
&\ +\eta(Z)\,g(a_1(X),Y) +\eta(Y)\,g(a_2(X),Z)+\eta(X)\,g(a_3(Z),Y) = 0 .
\end{align}
Adding the resulting equation \eqref{E-3.49a} to \eqref{E-3.46}, then applying \eqref{E-3.24}, we get equation of the form
\begin{align}\label{E-3.50x}
& - 2\,g((\nabla_Y\,\varphi)X,\ \varphi h Z) +\eta(X)\,g(b_1(Z),Y) +\eta(Y)\,g(b_2(X),Z) +\eta(Z)\,g(b_3(X),Y) = 0,
\end{align}
where $b_1,b_2,b_3$ are linear operators (polynomials in $h,\varphi,\widetilde Q$) on $TM$ and $b_1,b_2$ vanish on $\xi$.
Taking $Z=\xi$ in \eqref{E-3.50x}, we get $g(b_3(X),Y)=0$ for all $X,Y$. Hence $b_3=0$ and \eqref{E-3.50x} is reduced~to
\begin{align}\label{E-3.50}
 2\,g((\nabla_{X}\,\varphi)Y, \varphi h Z) = \eta(X)\,g(b_1(Z),Y) +\eta(Y)\,g(b_2(Z),X).
\end{align}
Replacing $Y$ by $\varphi Y$ in \eqref{E-3.50} and using \eqref{E-3.29} and \eqref{E-nS-2.2}, we obtain
\begin{align}\label{E-3.50b}
  g((\nabla_{X}\,\varphi)Y, h Q Z) = \eta(X)\,g(c_1(Z), Y) +\eta(Y)\,g(c_2(Z), X) +\eta(Z)\,g(c_3(X),Y),
\end{align}
where $c_1,c_2,c_3$ are linear operators, and $c_1,c_2$ vanish on $\xi$.
Taking $Z=\xi$ in \eqref{E-3.50b}, we get $c_3=0$.

Taking $X=\xi$ in \eqref{E-3.50b}, and using $(\nabla_\xi\,\varphi)X = \varphi h X + \widetilde Q X$,
we find $c_1 = -(\varphi h^2 + \widetilde Q h) Q$.
Taking $Y=\xi$ in \eqref{E-3.50b}, and using \eqref{E-nS-01c}, we find $c_2 = (\varphi h^2 - h + \widetilde Q h) Q$.
Thus, \eqref{E-3.50b} reads~as
\begin{align}\label{E-3.50y}
 g((\nabla_{X}\,\varphi)Y, h Q Z) = -\eta(X)\,g((\varphi h^2 + \widetilde Q h)Q Z, Y) +\eta(Y)\,g((\varphi h^2 - h +\widetilde Q h) Q Z, X).
\end{align}
Finally, replacing $QZ$ by $Z$ in \eqref{E-3.50y}, since $Q$ is nonsingular, we get \eqref{E-3.50c}.
\hfill$\square$

\section*{Conclusions}

Some properties of the curvature tensor and the tensors $\varphi$ and $h$ of nearly Sasakian manifolds (see \cite{C-MD-2016,NDY-2018}) were extended
to weak nearly Sasakian manifolds with the conditions \eqref{E-nS-10} and \eqref{E-nS-04c}.
Our results answer the question in the introduction and show that the weak nearly Sasakian structure allows us to
expand the theory of contact metric manifolds.
Our Theorem~\ref{T-2.2} characte\-rizes Sasakian manifolds in the class of weak almost contact metric manifolds with the condition \eqref{E-nS-10}
using the property \eqref{E-nS-Sas}.
According to our Theorem~\ref{Th-4.5}, the properties \eqref{E-nS-10} and \eqref{E-nS-04c} are sufficient for a weak nearly Sasakian manifold of dimension greater than five to be Sasakian, this generalizes Theorem~3.3 from~\cite{NDY-2018}, see also \cite[Theorem~4.9]{C-MD-2019}.
After \cite{C-MD-2016,C-MD-2019,MN-2023} we are looking forward to studying five-dimensional weak nearly Sasakian manifolds.
A more difficult task is to extend a theory such as twistor spinors, see~\cite{K-2023}, by considering non-singular skew-symmetric tensors instead of linear complex structures.
Based on applications of the nearly Sasakian structure, we expect that certain weak structure will be useful for geometry and theoretical~physics.

\baselineskip=11.7pt

\end{document}